\documentclass[12pt,reqno]{amsart}

\usepackage{amscd} 
\usepackage{amsfonts} 
\usepackage{amsmath} 
\usepackage{amsrefs} 
\usepackage{amssymb} 
\usepackage{amsthm} 
\usepackage{latexsym} 
\usepackage{mathrsfs} 
\usepackage{mathtools} 
\mathtoolsset{showonlyrefs} 
\usepackage{slashed}  

\usepackage{verbatim}
\usepackage{cases}
\usepackage[dvips]{epsfig}
\usepackage{epsf} 
\usepackage[bookmarksnumbered,pdfpagelabels=true,plainpages=false,colorlinks=true,
            linkcolor=black,citecolor=black,urlcolor=black]{hyperref}
\usepackage{url}
\usepackage{color}
\usepackage{xcolor}
\usepackage{graphicx}
\usepackage{enumitem} 

\usepackage{pifont} 
\usepackage{upgreek} 
\usepackage{fancyhdr} 
\usepackage{calligra} 
\usepackage{marvosym} 
\usepackage[percent]{overpic} 
\usepackage{pict2e} 
\usepackage[hypcap=false]{caption} 
\usepackage{wasysym} 
\usepackage{accents}


\usepackage[papersize={8.5in,11in}, margin=1in,footskip=0.4in,vmarginratio={1:1},hmarginratio={1:1}]{geometry} 
\setcounter{secnumdepth}{4} 
\setcounter{tocdepth}{4}

\pagestyle{fancy}
\headheight 27pt
\rhead[]{\thepage}
\chead[\textsf{Barotropic viscous relativistic hydrodynamics}]{\textsf{Bemfica, Disconzi, Graber}}
\lhead[\thepage]{}
\rfoot[]{}
\cfoot{}
\lfoot{}

\newcommand{\la}{\langle}
\newcommand{\ra}{\rangle}

\newcommand{\cl}{\mathcal}

\theoremstyle{plain}
\newtheorem{theorem}{Theorem}[section]

\newtheorem{proposition}[theorem]{Proposition}
\newtheorem{corollary}[theorem]{Corollary}

\theoremstyle{definition}
\newtheorem{remark}[theorem]{Remark}

\newtheorem{definition}[theorem]{Definition}

\numberwithin{equation}{section}
\allowdisplaybreaks


\newcommand{\norm}[1]{\|#1\|}


\newcommand{\mss}{\hspace{0.2cm}}
\newcommand{\ms}{\hspace{0.25cm}}


\newcommand{\cA}{\mathcal{A}}
\newcommand{\cB}{\mathcal{B}}
\newcommand{\cC}{\mathcal{C}}
\newcommand{\cF}{\mathcal{F}}
\newcommand{\cL}{\mathcal{L}}
\newcommand{\cR}{\mathcal{R}}
\newcommand{\cS}{\mathcal{S}}


\newcommand{\bC}{\mathbb{C}}
\newcommand{\bE}{\mathbb{E}}
\newcommand{\bR}{\mathbb{R}}

\newcommand{\fR}{\mathfrak{R}}


\newcommand{\Proj}{\mathsf{\Pi}}

\newcommand{\ii}{\mathrm{i}}



\begin{document}
\title[Barotropic viscous relativistic hydrodynamics]{Local well-posedness in Sobolev spaces for first-order
barotropic causal relativistic viscous hydrodynamics}
\author[Bemfica, Disconzi, Graber]{
Fabio S. Bemfica$^{* \$}$, 
Marcelo M. Disconzi$^{*** \#}$, 
and P.~Jameson Graber$^{**** \ddagger}$
}

\thanks{$^{\$}$FSB gratefully acknowledges support from a Discovery grant administered by Vanderbilt University. Part of this work was done while FSB was visiting Vanderbilt University.
}

\thanks{$^{\#}$MMD gratefully acknowledges support from a Sloan Research Fellowship 
provided by the Alfred P. Sloan foundation, from NSF grant DMS-1812826,
from a Discovery grant administered by Vanderbilt University, and from
a Dean's Faculty Fellowship.
}

\thanks{$^{\ddagger}$PJG gratefully acknowledges support from NSF grant DMS-1905449.
}

\thanks{$^{*}$Universidade Federal do Rio Grande do Norte, Natal, RN, Brazil.
\texttt{fabio.bemfica@ect.ufrn.br}}

\thanks{$^{***}$Vanderbilt University, Nashville, TN, USA.
\texttt{marcelo.disconzi@vanderbilt.edu}}

\thanks{$^{****}$Baylor University, Baylor, TX, USA.
\texttt{Jameson\_Graber@baylor.edu}}

\begin{abstract}
We study the theory of relativistic 
viscous hydrodynamics introduced in \cite{DisconziBemficaNoronhaBarotropic,Kovtun:2019hdm}, which
provided a causal and stable first-order theory of relativistic fluids with viscosity
in the case of barotropic fluids.
The local well-posedness of its equations of motion has been previously established
in Gevrey spaces. Here, we improve this result by proving local well-posedness
in Sobolev spaces.

\bigskip

\noindent \textbf{Keywords:} relativistic viscous fluids; causality; local well-posedness.

\bigskip

\noindent \textbf{Mathematics Subject Classification (2010):} 
Primary: 35Q75; 
Secondary: 	
	35Q35,  	
35Q31, 

\end{abstract}

\maketitle

\tableofcontents

\section{Introduction\label{S:Intro}}
Relativistic fluid dynamics is widely used in many branches of physics,
including high-energy nuclear physics \cite{Baier:2007ix}, astrophysics \cite{RezzollaZanottiBookRelHydro-2013}, and cosmology \cite{WeinbergBookCosmology-2008}.
Its power stems from conservation laws, such as the local conservation of energy and 
momentum, which allow one to investigate the macroscopic dynamics of conserved quantities without knowing the fate of the system's microscopic degrees of freedom.
In other words, although the complete behavior of physical systems is ultimately determined
by the dynamics of its microscopic constituents, one can bypass the usually
intractable problem of solving the full microscopic dynamics and work instead
within the scope of the the so-called \emph{fluid approximation.} The latter is understood
as a regime determined by energy scales where the system's microscopic 
constituents behave collectively as a continuum, which is then identified as a 
fluid \cite{DeGroot:1980dk}. While there remain questions about the details
of how to fully derive relativistic fluid dynamics from an underlying microscopic 
theory \cite{Bhattacharyya:2008jc,Heller:2016rtz,Romatschke:2017ejr,
Baier:2007ix,Denicol:2016bjh,Denicol:2012cn},
and rigorous mathematical results in this direction are few 
\cite{Speck-Strain-2011,Elskens-Kiessling-2020},
the overwhelming success of the relativistic fluid dynamics more than justifies the importance of studying
its mathematical properties. Furthermore, from a purely mathematical point of view, 
relativistic fluid dynamics has also been 
a fertile source of mathematical problems  (see, e.g.,
\cite{ChristodoulouBookShocks-2007,
ChristodoulouBookShocks-2019,
Choquet-BruhatBook-2009,
AnileBook-1990,
RezzollaZanottiBookRelHydro-2013,
DisconziSpeckRelEulerNull}
 and references therein). 

The first works on relativistic fluids go back to the early days of relativity theory with the works 
of Einstein \cite{Einstein:1914bx} and Schwarzchild \cite{Schwarzschild-1916}. The first
general\footnote{By ``general" we mean outside symmetry classes or beyond
one spatial dimension. With symmetry or in $1+1$ dimensions, the equations of relativistic fluid dynamics studied
by Choquet-Bruhat and Lichnerowicz reduce to equations for which earlier
techniques could have been applied, although it seems difficult to locate in the literature
specific applications of such known techniques to the equations of relativistic fluids
under symmetry assumptions or in one spatial dimension.} mathematical treatment of relativistic fluids was done by Choquet-Bruhat 
\cite{Choquet-Bruhat-1958} and Lichnerowicz \cite{LichnerowiczBookMHD-1967}.
Such works, as well as most of the studies in relativistic fluid dynamics since then, focused
on perfect fluids, i.e., fluids where viscosity and heat dissipation are absent\footnote{The literature
on this topic is quite large and an appropriate review is beyond the scope of this work. See
the literature cited in the first paragraph of this introduction and references therein
for further information.}. The equations describing relativistic perfect fluids are the well-known
relativistic Euler equations.

There are, however, important situations in physics where the relativistic Euler equations
are not appropriate, and a model of relativistic
fluids \emph{with viscosity} is needed. One such situation is in the study 
of the quark-gluon plasma, which is an exotic type of fluid forming in collisions
of heavy ions performed at the Large Hadron Collider (LHC) at CERN
and at the Relativistic Heavy Ion Collider (RHIC) at Brookhaven National Laboratory. 
Its discovery was named by the American Physical Society one of the 10 most important 
findings in physics in the last decade \cite{Chodos-APSNewsQGP-2010} and continues
to be a source of scientific breakthroughs \cite{STAR:2017ckg,PhysicsORG-NewsMostVorticalFluid}.
For the quark-gluon plasma, it is well-attested that theoretical 
predictions do not match
experimental data if viscosity is not taken into account \cite{Heinz:2013th,Romatschke:2017ejr}.
Another case, where viscosity is likely to play an important role, is in the study of gravitational waves
produced by neutron star mergers, 
which have been detected by
the Laser Interferometer Gravitational-Wave Observatory (LIGO)
\cite{LIGOVirgoPressReleaseNeutronStars,TheLIGOScientific:2017qsa,Monitor:2017mdv,GBM:2017lvd,
Abbott:2018exr}. Recent state-of-the-art numerical simulations
\cite{Alford:2017rxf,Shibata:2017xht,Shibata:2017jyf} convincingly show that 
the post-merger gravitational wave signal is likely to be affected by viscous effects.
Thus, one has two of the most cutting-edge 
experimental apparatuses in modern science (LHC and LIGO) producing data
that \emph{requires relativistic fluids with viscosity}
for its explanation\footnote{Although it is not claimed
that \emph{all} the data generated in these experiments can only be explained with viscosity.}.
But despite the importance of
relativistic viscous fluids, many essential questions remain unanswered and
very little is known about their mathematical properties.

Unlike the case of perfect fluids, it remains open what the best model 
for the description of relativistic viscous fluids is\footnote{It is interesting to notice 
that the generalization of the classical Navier-Stokes to a general Riemannian manifold is also
somewhat problematic, and there are different possible choices for the equations, see
\cite{DisconziChanCzubak}.}. 
This is because it is challenging to construct theories
of relativistic viscous fluids that are (i) causal, (ii) stable, and (iii) locally well-posed
\cite{RezzollaZanottiBookRelHydro-2013}. Causality
is a fundamental property of relativity stating that no information propagates faster 
than the speed of light. Stability here means linear stability about constant 
equilibrium states, i.e., mode stability, which on physical grounds
is expected to hold when viscous dissipation is present. Local well-posedness 
assures that the equations of motion admit a unique solution, a 
crucial property for physical models\footnote{Local well-posedness is also important
in the study of convergence of numerical schemes. The relation between local well-posedness
and convergence is subtle and a discussion of this topic is outside the scope
of this work. Interested readers can consult \cite{Guermond-Marpeau-Popov-2008} and references 
therein. This is an important topic since many studies of realistic physical systems
do rely on numerical computations.}. 
One requires (i) and (iii) to hold both in a fixed background and when
the fluid equations are coupled to Einstein's equations, whereas (ii) is usually required
only in Minkowski background (thus, all references to stability in what follows refer to the
equations in Minkowski space).

The first theory of relativistic viscous fluids was introduced by Eckart \cite{Eckart:1940te},
followed by a similar theory by Landau and Lifshitz \cite{LandauLifshitzBookFluids-1987}.
While these theories can be viewed as the simplest generalization of the classical Navier-Stokes
equations to the relativistic setting, they have been showed to be acausal and unstable 
\cite{Hiscock:1985zz,PichonViscousFluids-1965}. The M\"ueller-Israel-Stweart theory
originally introduced in the references \cite{Mueller-1967,Israel:1976tn,Israel:1979wp} is an attempt
to overcome the acausality and instability of the Eckart and Landau-Lifshitz theories and
is based on extended irreversible thermodynamics \cite{Jou-Vazquez-Lebon-Book-2010,Muller-Ruggeri-Book-1998}.
In this formalism, viscous  and dissipative contributions to the fluid's energy-momentum tensor
are not given in terms of standard \emph{hydrodynamic variables,} which are the 
fluid's velocity, energy density, baryon density, and quantities derived from these\footnote{Given
an equation of state, whose form depends on the nature of the fluid, all thermodynamic scalars (such as
energy density, entropy, temperature, pressure, etc.) are related via the laws of thermodynamics and
only two of them are independent. Absent phase
transitions, all such relations are invertible and the choice of which two thermodynamic scalars are 
independent is a matter of convenience.}. Rather, in extended thermodynamic theories, 
viscous and dissipative contributions are modeled by new variables, commonly referred to as
\emph{extended variables}, which satisfy further equations
of motion. In the original works of M\"uller, Israel, and Stewart, such equations of motion
were chosen in order to enforce the second-law of thermodynamics. In modern versions
of the M\"uller-Israel-Stewart theory\footnote{Strictly speaking, these modern derivations do not 
exactly reproduce the original M\"uller-Israel-Stewart equations, but are close enough so that
it has become common practice to still call them M\"uller-Israel-Stewart, although sometimes they 
are also referred by another names (re-summed \cite{Baier:2007ix} 
BRSSS or DNMR \cite{Ryu:2017qzn,Denicol:2012cn}). All such theories
are based on extended variables and behave very similarly
when it comes to issues of stability and causality, so that
it does not seem important to distinguish them here.}, the equations of motion are derived from
microscopic theory or are based on effective theory arguments (see below for further remarks on
the derivation of fluid equations from microscopit theory) \cite{Baier:2007ix,Ryu:2017qzn,Denicol:2012cn}.
Theories where viscous or dissipative effects are modeled solely by the hydrodynamics variables 
are known as \emph{first-order theories,} whereas those where such effects are modeled 
by extended variables are known as \emph{second-order theories} \cite{RezzollaZanottiBookRelHydro-2013}.

The M\"uller-Israel-Stewart theory has been proved to be stable and its linearization
about constant equilibrium states is causal \cite{Hiscock:1983zz,Olson:1989ey}
Furthermore, it has been extensive applied to the construction of successful phenomenological
models of the quark-gluon plasma \cite{Heinz:2013th,Romatschke:2017ejr}. 
Consequently, the M\"uller-Israel-Stewart theory is currently the most used 
theory for the description of relativistic viscous fluids. More recently, 
it has been proved that the M\"uller-Israel-Stewart theory with 
bulk viscosity\footnote{As
in the case of classical fluids, there are generally two types of viscosity in relativistic fluids, namely,
shear viscosity and bulk viscosity. Also as in classical fluids, heat conduction is present
in relativistic theories of non-perfect fluids. Since these are all phenomena associated with 
out-of-equilibrium physics, for simplicity we will henceforth refer to all of them simply as ``viscous,"
making no distinction between viscosity and heat conduction effects.}
(but with no shear viscosity nor heat conduction) is locally well-posed 
and causal in the full nonlinear\footnote{Talking about causality in the ``nonlinear
regime" is redundant in that the equations of motion are nonlinear. However, this language
is sometimes used in the literature to make a contrast with earlier and more common
causality results that apply only to the linearization of the equations about constant solutions.} 
regime, both in a fixed background and when the
equations are coupled to Einstein's equations \cite{DisconziBemficaNoronhaISBulk}
(see \cite{Pu:2009fj,Floerchinger:2017cii,Denicol:2008ha} for earlier causality results also valid 
in the nonlinear regime but under strong symmetry assumptions or in $1+1$ dimensions).
A similar causality and well-posedness result is valid in Gevrey spaces when shear viscosity is 
present \cite{DisconziBemficaHoangNoronhaRadoszNonlinearConstraints}.

Its great success nonetheless, it is far from clear whether the M\"uller-Israel-Stewart theory
provides the most accurate description of relativistic viscous fluids over all scales
where the fluid approximation is supposed to hold and viscous effects expected to be
relevant. For instance, it is not known whether the M\"uller-Israel-Stewart 
equations can be applied to the study of neutron star mergers 
\cite{Alford:2017rxf,Most:2018eaw}.
Moreover, the mathematical foundations of the M\"uller-Israel-Stewart are for the most part
lacking, with the aforementioned results \cite{DisconziBemficaNoronhaISBulk,DisconziBemficaHoangNoronhaRadoszNonlinearConstraints} being the only 
ones available in the literature. Finally, the M\"uller-Israel-Stewart equations do not
seem capable to describe the dynamics of shock waves 
or more general types of fluid singularities \cite{Olson:1991pf,GerochLindblom-1991,DisconziHoangRadosz}. In view of these limitations,
there is a strong interest in searching for alternative theories of relativistic viscous fluids
\cite{Kovtun:2019hdm}.

The instability results that ruled out the Eckart and Landau theories are in fact applicable
to a large class of first-order theories \cite{Hiscock:1985zz}. Consequently, for a long time it 
was thought that first-order theories were intrinsically unstable (see discussions in
\cite{RezzollaZanottiBookRelHydro-2013,Kovtun:2019hdm,Tsumura:2007ji, Tsumura:2011cj,Tsumura:2012ss,Van:2007pw,Van:2011yn}).
Nevertheless, in recent years this perception has been shown to be overstated, with 
several different results showing the viability of first-order theories. In \cite{DisconziViscousFluidsNonlinearity},
causality and local well-posedness (in Gevrey spaces) of the Lichnerowicz theory has been established in the case
of irrotational fluids with or without coupling to Einstein's equations,
a result that has been slightly improved in \cite{DisconziCzubakNonzero}.
The Lichnerowicz theory is a first-order theory introduced in \cite{LichnerowiczBookGR-1955} and which has 
led to interesting applications in cosmology 
\cite{DisconziKephartScherrerNewApproach,DisconziKephartScherrerViableFirstOrder}. However, it remains
open whether Lichnerowicz's theory is stable. In \cite{DisconziBemficaNoronhaConformal}
a first-order theory of relativistic viscous \emph{conformal} fluids has been introduced based on
kinetic theory. Its stability, causality, and local well-posedness (in Gevrey spaces when the equations
are coupled to Einstein's equations and in Sobolev
spaces when the equations are considered in Minkowski background) has been proven in the works
\cite{DisconziBemficaNoronhaConformal,DisconziExistenceCausalityConformal
,DisconziBemficaRodriguezShaoSobolevConformal} , and applications
relevant to the study of the quark-gluon plasma have also been developed \cite{DisconziBemficaNoronhaConformal}. 
In \cite{FreistuhlerTemple-2014,FreistuhlerTemple-2017,FreistuhlerTemple-2018}
(see also \cite{Freistuhler-2020}) a first-order theory has been
introduced for which stability holds in the fluid's rest frame. This leads to the possibility
that such theory might be stable and causal, although it is known that stability in the rest frame
is not enough to ensure stability in general \cite{Hiscock:1985zz}. 
Earlier first-order theories for which stability
has also been established can be found in \cite{Van:2007pw,Van:2011yn}.
Aside from all these results concerning first-order theories, further causality, stability, 
and local-wellposedness results have been established
in the context of the so-called divergence-type theories
\cite{Lehner:2017yes,GerochLindblom-DivergenceType-1990, GerochLindblom-1991,
Liu-Muller-Ruggeri-1986,Muller-Ruggeri-Book-1998,PeraltaRamos:2009kg,PeraltaRamos:2010je},
which constitute examples of second-order theories
different than the M\"uller-Israel-Stewart theory (see also 
\cite{Nagy-Ortiz-Reula-1994,Kreiss-Nagy-Ortiz-Reula-1997,Reula-Nagy-1997}).
We also mention the so-called anisotropic hydrodynamics
\cite{Alqahtani:2017mhy}, which is a stable second-order theory that has been 
very successful in studies of the quark-gluon plasma,
although to the best of our knowledge there has been no results showing causality
or local well-posedenss for anisotropic hydrodynamics.

The previous discussion highlights not only the importance of investigating relativistic fluids
with viscosity but also how its study is a very active field of research, with some of the
most basic questions, namely, causality, stability, and local well-posedness, remaining
largely open. This paper is concerned with the well-posedness of the Cauchy 
problem for the first-order theory of relativistic fluids defined by the energy-momentum
tensor \eqref{E:Energy_momentum} below. 

This energy-momentum tensor was introduced simultaneously in \cite{DisconziBemficaNoronhaBarotropic}
and  \cite{Kovtun:2019hdm} using effective field theory arguments. In \cite{DisconziBemficaNoronhaBarotropic}
a kinetic theory derivation (at zero chemical potential) was also given, while
\cite{Hoult:2020eho} discussed the necessary modifications that stem from the inclusion of a conserved current.
Under the
assumption of a barotropic equation of state (i.e., when the pressure is a function
of the energy density only), the 
stability of the corresponding equations of motion has been established in
these works, whereas in \cite{DisconziBemficaNoronhaBarotropic} causality
and local well-posedness of the equations of motion has also been proven. Such
local well-posedness has been established in Gevrey spaces 
with and without coupling to Einstein's equations\footnote{In fact, a slightly weaker 
statement has been proved in \cite{DisconziBemficaNoronhaBarotropic}, but this
does not change the overall theme discussed here nor the goal of this manuscript. See
Remark \ref{R:Gevrey_original}.}. Our goal in this manuscript is to improve
this result by proving local well-posedness in Sobolev spaces. However, contrary to
\cite{DisconziBemficaNoronhaBarotropic}, here we do not consider coupling
to Einstein's equations, restricting ourselves to the case where the evolution takes place
in Minkowski space.

We finish this introductions with two explanatory remarks. First, the question of the correct
theory of relativistic viscous fluids cannot be decided solely by considerations from microscopic theory.
This is because the same underlying microscopic theory can give rise to different, inequivalent, 
fluid approximations depending on the chosen coarse-graining procedure 
\cite{Denicol:2012cn,DeGroot:1980dk}. 
Second, above
we referred to numerical simulations that show the importance of viscous effects in
neutron star mergers \cite{Alford:2017rxf}. We remark that these
simulations do not numerically solve models relativistic fluids with viscosity, 
relying rather on estimates for the relevant transport scales and the size of gradients of the hydrodynamic fields determined in an inviscid evolution.
Indeed, as hinted above, it is not yet known which, if any, of the current models of relativistic viscous fluids
is appropriate to describe neutron star mergers.

\medskip
\noindent \textbf{Acknowledgments:} We are grateful to Magdalena Czubak for discussions.

\section{Equations of motion and statement of the results}

The energy-momentum tensor that defines the first-order theory of relativistic viscous fluids
studied here (introduced in \cite{DisconziBemficaNoronhaBarotropic,Kovtun:2019hdm},
see discussion in the introduction) is given by  
\begin{align}
\label{E:Energy_momentum}
\mathcal{T}_{\alpha \beta} & = 
(\varepsilon + A_1 )u_\alpha u_\beta  + 
(P + A_2) \Proj_{\alpha \beta}
- 2 \upeta \upsigma_{\alpha \beta} + u_\alpha Q_\beta + u_\beta Q_\alpha,
\end{align}
where
\begin{align}
\begin{split}
A_1 & 
= \upchi_1 \frac{u^\alpha \nabla_\alpha \varepsilon}{\varepsilon + P} 
+ \upchi_2 \nabla_\alpha u^\alpha,
\\
A_2 & = 
\upchi_3 \frac{u^\alpha \nabla_\alpha \varepsilon}{\varepsilon + P}
+ \upchi_4 \nabla_\alpha u^\alpha,
\\
Q_\alpha & = \uplambda  ( \frac{c_s^2}{\varepsilon + P} \Proj_{\alpha}^\mu  \nabla_\mu \varepsilon + 
u^\mu \nabla_\mu u_\alpha ),
\\
\upsigma_{\alpha\beta} & = \frac{1}{2}( \Proj_\alpha^\mu \nabla_\mu u_\beta + \Proj_\beta^\mu \nabla_\mu u_\alpha
-\frac{2}{3} \Proj_{\alpha\beta} \nabla_\mu u^\mu).
\end{split}
\nonumber
\end{align}
Here, $\varepsilon$ is the fluid's energy density; $P$ is the fluid's pressure, where we assume
a barotropic equation of state, thus $P=P(\varepsilon)$;
$g$ is the spacetime metric\footnote{By ``metric" we always mean a ``Lorentzian
metric."}; $u$ is the fluid's four-velocity, which is future-pointing and unit timelike with respect 
to $g$, so in particular $u$ satisfies the constraint
\begin{align}
\label{E:u_unit}
g_{\alpha\beta} u^\alpha u^\beta &= -1.
\end{align}
Notice that we are assuming the spacetime to be time-oriented as $u$ is taken as a future-pointing
vectorfield. In practice, we will work in Minkowski space with standard orientation;
 $\Proj$ is the projection onto the space orthogonal to $u$, given by
\begin{align}
\Proj_{\alpha\beta} = g_{\alpha\beta}+ u_\alpha u_\beta;
\nonumber
\end{align}
$\upeta$, $\upchi_1,\upchi_2, \upchi_3,\upchi_4$, and $\uplambda$ are
 transport coefficients, which are known functions of $\varepsilon$ and 
 model the viscous effects in the fluid; and
 $\nabla$ is the covariant derivative associated with the metric $g$.
Indices are raised and lowered using the spacetime metric. We adopt the convention that
lowercase Greek indices vary from $0$
to $3$, Latin indices vary from $1$ to $3$, and
repeated indices are summed over their range. Expressions such as
$z_{\alpha}$, $w_{\alpha\beta}$, etc. represent the components of a vector or tensor
with respect to a system of coordinates $\{ x^\alpha \}_{\alpha=0}^3$ in spacetime,
where the coordinates are always chosen so that $x^0=t$ represents a time coordinate.
We will consider the fluid dynamics in Minkowski background, so that the $g$ is the Minkowski metric.
We note for future reference that equation \eqref{E:u_unit} implies
\begin{align}
u^\alpha \nabla_\beta u_\alpha = 0.
\label{E:Derivative_u_orthogonal}
\end{align}

The equations of motion are given by 
\begin{align}
\label{E:Div_T}
\nabla_\alpha \mathcal{T}^\alpha_\beta = 0
\end{align}
supplemented by the constraint \eqref{E:u_unit}. 

We are now ready to state our main result, which is the following.

\begin{theorem}
\label{T:main_theorem}
Let $g$ be the Minkowski metric on $\mathbb{R} \times \mathbb{T}^3$,
where $\mathbb{T}^3$ is the three-dimensional torus. Let $P,\upeta, \upchi_1,\upchi_2,\upchi_3,\upchi_4, \uplambda: (0,\infty) \rightarrow (0,\infty)$ be analytic functions satisfying
$\uplambda,\upchi_1,\upeta>0$, $P > 0$, $c_s^2 := P^\prime > 0$, and
\begin{align}
& 9 \uplambda ^2 \upchi _2^2 c_s^4+6 \uplambda  c_s^2 \left(\upchi _1 \left(4 \upeta -3 \upchi _4\right) \left(2 \uplambda +\upchi _2\right)+3 \upchi _2 \upchi _3 \left(\uplambda +\upchi_2\right)\right) \\
&+\left(\upchi _1 \left(4 \upeta -3 \upchi _4\right)+3 \upchi _3 \left(\uplambda +\upchi_2\right)\right){}^2>0,\nonumber
\\
&\uplambda\ge \upeta,
\nonumber
\\
&3\upchi_4> 4\upeta,
\nonumber \\
&2\uplambda\upchi_1\ge \uplambda\upchi_2c_s^2-\upchi_1\left (\upchi_4-\frac{4\upeta}{3}\right )+\uplambda\upchi_3+\upchi_3\upchi_2,\nonumber
\\
&\uplambda\upchi_1+c_s^2\uplambda\left (\upchi_4-\frac{4\upeta}{3}\right )\ge c_s^2\uplambda \upchi_2 + \uplambda\upchi_3 + \upchi_2 \upchi_3 - \upchi_1 \left (\upchi_4-\frac{4}{3} \upeta\right ) \ge 0.
\nonumber
\end{align}
Let $\varepsilon_{(0)} \in H^r(\mathbb{T}^3,\mathbb{R})$,
$\varepsilon_{(1)} \in H^{r-1}(\mathbb{T}^3,\mathbb{R})$,
$u_{(0)} \in H^r(\mathbb{T}^3,\mathbb{R}^3)$, 
and 
$u_{(1)} \in H^{r-1}(\mathbb{T}^3,\mathbb{R}^3)$
be given, where $H^r$ is the Sobolev space and $r > 9/2$. Assume that 
$\varepsilon_{(0)} \geq C_0 > 0$ for some constant $C_0$.

Then, there exists a $T>0$, a function 
\begin{align}
\nonumber
\varepsilon \in C^0([0,T],H^r(\mathbb{T}^3,\mathbb{R})) \cap 
C^1([0,T],H^{r-1}(\mathbb{T}^3,\mathbb{R})) \cap 
C^2([0,T],H^{r-2}(\mathbb{T}^3,\mathbb{R})),
\end{align}
and a vector field 
\begin{align}
u \in C^0([0,T],H^r(\mathbb{T}^3,\mathbb{R}^4)) \cap 
C^1([0,T],H^{r-1}(\mathbb{T}^3,\mathbb{R}^4)) \cap 
C^2([0,T],H^{r-2}(\mathbb{T}^3,\mathbb{R}^4))
\end{align}
such that equations \eqref{E:u_unit} and \eqref{E:Div_T} hold on 
$[0,T]\times \mathbb{T}^3$, and satisfy
$\varepsilon(0,\cdot) = \varepsilon_{(0)}$, 
$\partial_t \varepsilon(0,\cdot) = \varepsilon_{(1)}$,
$\mathcal{P}u(0,\cdot) = u_{(0)}$,
and $\mathcal{P} \partial_t u(0,\cdot) = u_{(1)}$, where $\partial_t$
is the derivative with respect to the first coordinate in $[0,T]\times \mathbb{T}^3$
and $\mathcal{P}$ is the canonical projection from the tangent bundle of
$[0,T]\times \mathbb{T}^3$ onto the tangent bundle of $\mathbb{T}^3$.
Moreover, $(\varepsilon,u)$ is the unique solution with the stated properties.
\end{theorem}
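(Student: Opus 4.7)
The plan is to treat the equations \eqref{E:Div_T} together with the constraint \eqref{E:u_unit} as a quasilinear second-order system in the unknowns $(\varepsilon,u)$. Since $\mathcal{T}_{\alpha\beta}$ depends on one derivative of $\varepsilon$ and $u$ through $A_1,A_2,Q_\alpha$, and $\upsigma_{\alpha\beta}$, its divergence is a genuinely second-order operator. My first step would be to use \eqref{E:u_unit} to eliminate $u^0$ algebraically and reduce to four scalar unknowns $(\varepsilon,u^1,u^2,u^3)$; preservation of the constraint by the reduced flow should follow from contracting $\nabla_\alpha \mathcal{T}^\alpha_\beta=0$ with $u^\beta$ and using \eqref{E:Derivative_u_orthogonal}, which kills the tangential component of the momentum equation. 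I would then identify the principal symbol of the reduced system; after a splitting of the velocity into parts longitudinal and transverse to the spatial wave vector, I expect it to decompose into a $2\times 2$ block encoding sound-wave propagation in $(\varepsilon, u^\parallel)$ and two copies of a damped-wave operator for the transverse components of $u$, with characteristic speeds depending on $\upchi_i$, $\upeta$, $\uplambda$, and $c_s^2$.

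The central analytical step is to prove $H^r$ energy estimates for the linearization of this reduced system at a background state $(\bar\varepsilon,\bar u)$ of the same regularity. I would commute $r$ spatial derivatives through the equations using Kato--Ponce and Moser product estimates on $\mathbb{T}^3$, and test against a symmetrizer built from the unknowns together with their first time and spatial derivatives. The algebraic inequalities listed in the theorem are, I believe, precisely what is needed to ensure that this symmetrizer is positive definite and that the viscous/dissipative contributions yield a sign-definite term in the energy identity; in particular, the last two displayed inequalities look tailored so that the quadratic form in first derivatives of $(\varepsilon,u)$ appearing on the right-hand side is coercive. The requirement $r>9/2$ should arise as the natural threshold for the coefficients and one of their derivatives to live in $L^\infty$ via $H^{r-2}\hookrightarrow C^1(\mathbb{T}^3)$, which is what lets one close the estimates in the nonlinear (variable-coefficient) setting.

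With the linear theory in hand, existence would come from a Picard-type iteration. One would start from a constant-in-time extension of the initial data compatible with $\varepsilon(0,\cdot)=\varepsilon_{(0)}$, $\partial_t\varepsilon(0,\cdot)=\varepsilon_{(1)}$, and the analogous projected data for $u$ (the missing $u^0$ component and its time derivative are recovered from \eqref{E:u_unit} and \eqref{E:Derivative_u_orthogonal}). One then defines $(\varepsilon^{(n+1)},u^{(n+1)})$ as the solution of the linearized system with coefficients frozen at $(\varepsilon^{(n)},u^{(n)})$. The a priori estimate yields uniform bounds in $C^0([0,T];H^r)\cap C^1([0,T];H^{r-1})$ for $T$ small, depending on the initial norms and the lower bound $C_0$ on $\varepsilon_{(0)}$ (which prevents degeneracy of the coefficients). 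Applying the linear machinery to the difference of successive iterates in a norm one derivative lower should give contraction, producing a limit in the stated regularity class by a standard weak-strong convergence/interpolation argument; uniqueness follows from the same estimate applied to the difference of two putative solutions.

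I expect the main obstacle to be the symmetrizer construction at the linearized level. In the Gevrey framework of \cite{DisconziBemficaNoronhaBarotropic}, analytic regularity absorbs a certain loss of derivatives via a Cauchy--Kovalevskaya--Nirenberg-type scheme, so a sharp symmetrizer is not needed; in Sobolev spaces, by contrast, one must exhibit a genuine positive-definite multiplier for the reduced second-order principal symbol, in the presence of the nonlinear constraint on $u$ and of the fairly intricate dependence of $\mathcal{T}_{\alpha\beta}$ on the transport coefficients. Matching the polynomial inequalities in the hypotheses to an explicit, coercive symmetrizer, while simultaneously ensuring that the constraint \eqref{E:u_unit} is consistently propagated and that the dissipative structure does not destroy hyperbolicity of the reduced system, is where I expect the real technical work to lie.
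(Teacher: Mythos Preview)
Your overall architecture (quasilinear hyperbolic system, linear energy estimate, Picard iteration, contraction in a lower norm) matches the paper's, but the paper's implementation is substantially different from what you sketch. Rather than eliminating $u^0$ and working with the second-order system in four unknowns, the authors introduce the derived variables $\mathcal{V}=\frac{u^\mu\nabla_\mu\varepsilon}{\varepsilon+P}$, $V^\alpha=\frac{\Proj^{\alpha\beta}\nabla_\beta\varepsilon}{\varepsilon+P}$, $\mathsf{S}^\alpha=u^\mu\nabla_\mu u^\alpha$, $S_\alpha^{\ \beta}=\Proj_\alpha^\mu\nabla_\mu u^\beta$ and rewrite the equations as a \emph{first-order} quasilinear system in a $30$-component vector $\mathbf{\Psi}$. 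The algebraic hypotheses of the theorem are used, not to make a dissipative quadratic form coercive, but to show that the symbol $\tilde{\mathcal{A}}^i\upzeta_i$ has only real eigenvalues and a complete set of eigenvectors (Proposition~\ref{P:Diagonalization}); the conditions translate into $0\le\beta_a\le1$ for the characteristic speeds $\beta_a$, including $\beta_\pm$ which involve $\sqrt{\Delta_D}$. Because the diagonalizing matrix $\cS(\psi,\upzeta)$ is homogeneous of degree zero in $\upzeta$ and genuinely $\upzeta$-dependent (through the eigenprojections~\eqref{Projection}), the symmetrizer $N_r=\langle\nabla\rangle^r\bigl(\tfrac{C_0}{2}I+\mathfrak{B}^*\mathfrak{B}\bigr)\langle\nabla\rangle^r$ is built as a \emph{pseudodifferential} operator of limited smoothness, and the energy estimate of Proposition~\ref{p:energyest} relies throughout on Marschall's $\Psi$DO calculus for symbols in $\cS^k_0(r,2)$. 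Finally, passing from the first-order system back to the original equations \eqref{E:u_unit}--\eqref{E:Div_T} is not done by a direct constraint-propagation argument; instead the paper solves the original second-order system in Gevrey regularity via Leray--Ohya theory and uses the $H^r$ estimates for $\mathbf{\Psi}$ to pass to the limit of a Gevrey approximating sequence (Section~7).

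The genuine gap in your proposal is the symmetrizer. You propose a classical multiplier ``built from the unknowns together with their first time and spatial derivatives,'' but you do not construct it, and the eigenstructure of the problem strongly suggests no such \emph{differential} symmetrizer is available: the characteristic speeds $\beta_\pm$ in~\eqref{betapm} involve $\sqrt{\Delta_D}$, so the eigenprojections are not polynomial in $\upzeta$, and the natural symmetrizer $\cS^*\cS$ is a zeroth-order symbol rather than a matrix of differential operators. This is precisely why the paper needs the $\Psi$DO machinery. Your reading of the hypotheses as coercivity conditions for a dissipative bilinear form is also off: there is no parabolic smoothing here---the system is (strongly) hyperbolic, and the inequalities encode reality and boundedness of the characteristic speeds, not sign-definiteness of a viscous term. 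Lastly, your plan to propagate \eqref{E:u_unit} by contracting $\nabla_\alpha\mathcal{T}^{\alpha}_{\ \beta}=0$ with $u^\beta$ is not what the paper does and would need a separate argument; the paper instead derives the second-order equation $u^\mu u^\nu\nabla_\mu\nabla_\nu(u_\alpha u^\alpha)=0$ and closes the loop through the Gevrey approximation of Section~7.
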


We proceed to make some comments about the assumptions and conclusions of Theorem \ref{T:main_theorem}.

We note that in view of \eqref{E:u_unit}, 
it suffices to provide the components of $u$ tangent to $\{t=0\}$
as initial data; this explains the statement involving the projector $\mathcal{P}$
in the Theorem. On the other hand, throughout the manuscript, we will consider systems of equations
for the full four-velocity $u = (u^0,u^1,u^2,u^3)$. In these cases, we will always take the initial condition
for $u$ defined by \eqref{E:u_unit} and \eqref{E:Derivative_u_orthogonal} when $(u^1,u^2,u^3)$ takes
the values of the given initial data.

The quantity $c_s^2$ corresponds to the fluid's sound speed in 
the case of a perfect fluid. In the presence of viscosity, the fluid's sound speed
is no longer given by $c_s^2$ (see section \ref{S:Characteristics} for a description
of the characteristic speeds of the system), but it is still convenient to introduce $c_s^2$.
We work on $\mathbb{T}^3$ for simplicity, since using the domain of dependence
property (proved in \cite{DisconziBemficaNoronhaBarotropic}) one can adapt the proof
to $\mathbb{R}^3$. On the other hand, the assumption $\varepsilon_0 \geq C_0 > 0$ is essential.
The equations can otherwise degenerate, resulting in a free-boundary
dynamics, a problem that only quite recently was solved for the case of a perfect fluid
\cite{DisconziIfrimTataru,Oliynyk-2019-arxiv,Miao-Shahshahani-Wu-2020-arxiv}
(see \cite{Hadzic-Shkoller-Speck-2019,Jang-LeFloch-Masmoudi-2016,Ginsberg-2019,
Oliynyk-2012,Trakhinin-2009} for earlier work focusing on particular cases or a priori estimates).

The assumptions on $P,\upeta, \upchi_1,\upchi_2,\upchi_3,\upchi_4$ and $\uplambda$ in Theorem 
\ref{T:main_theorem} are precisely the conditions found in 
\cite{DisconziBemficaNoronhaBarotropic} that ensure the causality and stability of the equations of motion. Although these conditions are a bit cumbersome to write, it is not difficult to see that they
are not empty. Moreover, given a specific choice of equation of state and transport coefficients,
it is generally not difficult to verify whether such conditions are satisfied.

The idea behind the proof of Theorem \ref{T:main_theorem} can be summarized as follows. First, we use
\eqref{E:u_unit} and \eqref{E:Derivative_u_orthogonal} to decompose \eqref{E:Div_T}
in the directions parallel and orthogonal to $u$, as it is customary in both the cases of perfect
and non-perfect relativistic fluids. Next, we construct new variables out of certain
combinations of $u$, $\varepsilon$, and its derivatives, and rewrite the equations of motion
in terms of these new variables. We then show that, under the hypotheses of the Theorem,
the principal symbol of the new system of equations can be diagonalized. This diagonalization
procedure can be carried over to the equations of motion upon the introduction of suitable
pseudodifferential operators. The pseudifferential calculus is needed because 
the diagonalization of the principal symbol involves certain rational functions of the eigenvalues 
and of the determinant of the principal symbol. Due to the quasilinear nature of the problem, we have
to deal with symbols of limited smoothness. Nevertheless, we are still able to obtain
good energy estimates for a linearized version of the problem that can be used to set up
a convergent iteration scheme, leading to existence and uniqueness of solutions to the 
new equations of motion we introduced. At this point, we need to show that this result
gives rise to existence and uniqueness of solutions to the original equations of motion, i.e.,
\eqref{E:u_unit} and \eqref{E:Div_T}. To do so, we need to derive yet another system
of equations that ensures that the constraint \eqref{E:Div_T} is satisfied. For this new system,
solutions are obtained in a more restrictive class of functions, namely, Gevrey functions. 
Since these are dense in Sobolev spaces, we finally obtain existence and uniqueness for the original
problem, in Sobolev spaces, by an approximation argument.

\section{A new system of equations\label{S:New_system}}
In this section we derive a new system of equations that will allow
us to establish Theorem \ref{T:main_theorem}. In order to do so, throughout this section,
we assume to be given a sufficiently regular solution to equations
\eqref{E:u_unit} and \eqref{E:Div_T}.

We begin using \eqref{E:u_unit} to decompose $\nabla_\alpha \mathcal{T}^\alpha_\beta$
in the directions orthogonal  and parallel to $u$, so that equation \eqref{E:Div_T}
gives
\begin{subequations}{\label{E:EofM}}
\begin{align}
&
u^\alpha\nabla_\alpha A_1+\nabla_\alpha Q^\alpha+(\varepsilon+P+A_1+A_2)\nabla_\alpha u^\alpha+u^\alpha\nabla_\alpha \varepsilon-2\upeta\sigma^{\alpha\beta}\sigma_{\alpha\beta}\\
&+Q_\alpha u^\beta\nabla_\beta u^\alpha=0,
\label{E:EofM_1}
\\
& 
\Proj^{\alpha\beta}\nabla_\beta A_2+u^\beta\nabla_\beta Q^\alpha-2\upeta\nabla_\beta\sigma^{\alpha\beta}+(\varepsilon+P+A_1+A_2)u^\beta\nabla_\beta u^\alpha+c_s^2\Proj^{\alpha\beta}\nabla_\beta\varepsilon
-2\sigma^{\alpha\beta}\nabla_\beta\upeta\\
&+2u^\alpha\upeta\sigma^{\mu\nu}\sigma_{\mu\nu}+Q^\beta\nabla_\beta u^\alpha-u^\alpha Q^\beta u^\mu\nabla_\mu u_\beta+Q^\alpha \nabla_\beta u^\beta=0.
\label{E:EofM_2}
\end{align}
\end{subequations}
Define 
\begin{align}
\begin{split}
S_\alpha^{\mss \beta} & := \Proj_\alpha^\mu \nabla_\mu u^\beta,
\\
\mathsf{S}^\alpha & := u^\mu \nabla_\mu u^\alpha,\\
\mathcal{V} & :=\frac{u^\mu \nabla_\mu \varepsilon}{\varepsilon+P},\\
V^\alpha & :=\frac{\Proj^{\alpha\beta} \nabla_\beta \varepsilon}{\varepsilon+P},
\end{split}
\nonumber
\end{align}
so that
\begin{subequations}{\label{E:Equations_1st_order}}
\begin{align}
\upchi_1 u^\alpha \nabla_\alpha \mathcal{V} +\uplambda c_s^2\nabla_\alpha V^\alpha+\uplambda \nabla_\alpha \mathsf{S}^\alpha+\upchi_2 u^\alpha\nabla_\alpha S^{\mss\nu}_\nu + r_1 & = 0,
\label{E:Equations_1st_order_1}
\\
\upchi_3\Proj^{\mu\alpha}\nabla_\alpha \mathcal{V} +\uplambda c_s^2 u^\alpha \nabla_\alpha V^\mu+\uplambda u^\alpha\nabla_\alpha \mathsf{S}^\mu 
+ B_\nu^{\mss\mu\lambda\alpha}\nabla_\alpha S_\lambda^{\mss \nu}
+ r_2
& =0,
\label{E:Equations_1st_order_2}
\\
-\Proj^{ \mu\alpha}\nabla_\alpha \mathcal{V} + u^\alpha \nabla_\alpha V^\mu +r_3& = 0,
\label{E:Equations_1st_order_3}
\\
u^\alpha \nabla_\alpha S_\lambda^{\mss \nu} - \Proj_\lambda^\alpha \nabla_\alpha
\mathsf{S}^\nu + r_4 & = 0,
\label{E:Equations_1st_order_4}
\\
u^\alpha \nabla_\alpha \varepsilon + r_5 & = 0,
\label{E:Equations_1st_order_5}
\\
u^\mu \nabla_\mu u^\alpha+ r_6 & = 0,
\label{E:Equations_1st_order_6}
\end{align}
\end{subequations}
where 
\begin{align}
B_\nu^{\mss\mu\lambda\alpha} & := (\upchi_4+\frac{2\upeta}{3})\Proj^{\mu\alpha}\delta_\nu^\lambda-\upeta(\Proj^{\alpha\lambda}\delta^\mu_\nu+\Proj^{\mu\lambda}\delta^\alpha_\nu).
\nonumber
\end{align}
Above, $r_i$, $i=1,\dots,6$ are analytic functions of 
$\mathcal{V}$, $V^\nu$, $\mathsf{S}^\nu$, $S_\lambda^{\mss \nu}$, 
$\varepsilon$, and $u^\alpha$; no derivative of these quantities appears in the 
$r_i$'s.

We now provide details on the derivation of \eqref{E:Equations_1st_order}.
Equations \eqref{E:Equations_1st_order_1} and 
\eqref{E:Equations_1st_order_2}
are equations \eqref{E:EofM_1} and \eqref{E:EofM_2}, respectively;
equations
\eqref{E:Equations_1st_order_5} and \eqref{E:Equations_1st_order_6}
are simply the definition of $V$ and $\mathsf{S}^\nu$;
equations \eqref{E:Equations_1st_order_3} and 
\eqref{E:Equations_1st_order_4} follow from contracting the identities
\begin{align}
\nabla_\mu \nabla_\nu \varepsilon - \nabla_\nu \nabla_\mu \varepsilon & = 0,\\
\nabla_\mu \nabla_\nu u^\alpha -\nabla_\nu \nabla_\mu u^\alpha & = 
R_{\mu \nu \ms \lambda}^{\mss \mss \alpha} u^\lambda = 0,
\end{align}
with $u^\mu$ and then with $\Proj^\nu_\lambda$. We also used the identity
\begin{align}
\nabla_\alpha u^\beta & = -u_\alpha \mathsf{S}^\beta + S_\alpha^{\mss \beta}.
\end{align} 
We write equations \eqref{E:Equations_1st_order} as a quasilinear first order system
for the variable\\ $\mathbf{\Psi} = (\mathcal{V}, V^\nu,\mathsf{S}^\nu, 
S_0^{\mss \nu},S_1^{\mss \nu},S_2^{\mss \nu},S_3^{\mss \nu},
\varepsilon, u^\nu)$ as
\begin{align}
\mathcal{A}^\alpha \nabla_\alpha \mathbf{\Psi} + \mathcal{R} & = 0,
\label{E:Matrix_system_1st_order}
\end{align}
where $\mathcal{R} = (r_1,\dots,r_6)$ and
$\mathcal{A}^\alpha$ is given by
\begin{align}
\mathcal{A}^\alpha=
\begin{bmatrix}
\upchi_1 u^\alpha & \uplambda c_s^2 \updelta^\alpha_\nu & \uplambda\updelta^\alpha_\nu & \upchi_2 u^\alpha\updelta^0_\nu & \upchi_2 u^\alpha\updelta^1_\nu & \upchi_2 u^\alpha\updelta^2_\nu & \upchi_2 u^\alpha\updelta^3_\nu & 0 & 0_{1\times 4}\\
\upchi_3\Proj^{\mu\alpha} & \uplambda c_s^2 u^\alpha I_4 & \uplambda u^\alpha I_4 & B_\nu^{\mss\mu 0 \alpha} & 
B_\nu^{\mss\mu 1 \alpha} & B_\nu^{\mss\mu 2 \alpha} & B_\nu^{\mss\mu 3 \alpha} & 0_{4\times 1} & 0_{4\times 4} \\
-\Proj^{\mu\alpha} & u^\alpha I_4 & 0_{4\times 4} & 0_{4\times 4} & 0_{4\times 4} & 0_{4\times 4} & 0_{4\times 4} & 0_{4\times 1} & 0_{4\times 4} \\
0_{4\times 1} & 0_{4\times 4} & -\Proj^\alpha_0 I_4 & u^\alpha I_4 & 0_{4\times 4} & 0_{4\times 4} & 0_{4\times 4} & 0_{4\times 1} & 0_{4\times 4} \\
0_{4\times 1} & 0_{4\times 4} & -\Proj^\alpha_1 I_4 & 0_{4\times 4} & u^\alpha I_4 & 0_{4\times 4} & 0_{4\times 4} & 0_{4\times 1} & 0_{4\times 4} \\
0_{4\times 1} & 0_{4\times 4} & -\Proj^\alpha_2 I_4 & 0_{4\times 4} & 0_{4\times 4} & u^\alpha I_4 & 0_{4\times 4} & 0_{4\times 1} & 0_{4\times 4} \\
0_{4\times 1} & 0_{4\times 4} & -\Proj^\alpha_3 I_4 & 0_{4\times 4} & 0_{4\times 4} & 0_{4\times 4} & u^\alpha I_4 & 0_{4\times 1} & 0_{4\times 4} \\
0 & 0_{1\times 4} &  0_{1\times 4}  & 0_{1\times 4} & 0_{1\times 4} & 0_{1\times 4} &  0_{1\times 4}  & u^\alpha & 0_{1\times 4} \\
0_{4\times 1} & 0_{4\times 4} & 0_{4\times 4} & 0_{4\times 4} & 0_{4\times 4} & 0_{4\times 4} & 0_{4\times 4} & 0_{4\times 1} & u^\alpha I_4 \\
\end{bmatrix},
\nonumber
\end{align}
where $0_{m\times m}$ is the $m\times m$ zero matrix and $I_{m\times m}$ is the $m \times m$
identity matrix.
Equation \eqref{E:Matrix_system_1st_order} is the main equation we will use to derive estimates.

\section{Diagonalization\label{S:Diagonalization}}
For everything that follows, we work under the assumptions of Theorem \ref{T:main_theorem}.

\begin{remark}[Silent use of \eqref{E:u_unit} and \eqref{E:Derivative_u_orthogonal}]
Throughout our computations, we will make successive use of 
equations \eqref{E:u_unit} and \eqref{E:Derivative_u_orthogonal} without explicitly 
mentioning them.
\end{remark}

\begin{proposition}
\label{P:Diagonalization}
 Let $\upxi$ be a timelike vector and assume that $\uplambda,\upchi_1,\upeta>0$, and
\begin{align}
&\Delta_D := 9 \uplambda ^2 \upchi _2^2 c_s^4+6 \uplambda  c_s^2 \left(\upchi _1 \left(4 \upeta -3 \upchi _4\right) \left(2 \uplambda +\upchi _2\right)+3 \upchi _2 \upchi _3 \left(\uplambda +\upchi_2\right)\right) \\
&+\left(\upchi _1 \left(4 \upeta -3 \upchi _4\right)+3 \upchi _3 \left(\uplambda +\upchi_2\right)\right){}^2>0,\label{Delta}\\
&\uplambda\ge \upeta,\label{10a}\\
&3\upchi_4> 4\upeta,\label{10b}\\
&2\uplambda\upchi_1\ge \uplambda\upchi_2c_s^2-\upchi_1\left (\upchi_4-\frac{4\upeta}{3}\right )+\uplambda\upchi_3+\upchi_3\upchi_2,\label{10b2}\\
&\uplambda\upchi_1+c_s^2\uplambda\left (\upchi_4-\frac{4\upeta}{3}\right )\ge c_s^2\uplambda \upchi_2 + \uplambda\upchi_3 + \upchi_2 \upchi_3 - \upchi_1 \left (\upchi_4-\frac{4}{3} \upeta\right ) \ge 0.\label{10c}
\end{align}
Then:\\
\noindent (i) $\det(\mathcal{A}^\alpha \upxi_\alpha) \neq 0$;\\
\noindent (ii) For any spacelike vector $\upzeta$, the eigenvalue problem 
$\mathcal{A}^\alpha(\upzeta_\alpha + \Lambda \upxi_\alpha)  V = 0$
has only real eigenvalues $\Lambda$ and a complete set of eigenvectors $V$.
\end{proposition}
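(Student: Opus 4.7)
The strategy is to exploit the block structure of $\mathcal{A}^\alpha$ together with Lorentz covariance, reducing the claim to an explicit algebraic computation in a convenient frame. Since every ingredient of $\mathcal{A}^\alpha$ is tensorial in $u$, $g$, and $\Proj$, I would pass without loss of generality to the rest frame of $u$, where $u^\alpha=\delta^\alpha_0$, $u_\alpha=-\delta^0_\alpha$, and $\Proj$ is the spatial identity. Decomposing $\upxi_\alpha=-a\,u_\alpha+b_\alpha$ with $u^\alpha b_\alpha=0$, one has $u^\alpha\upxi_\alpha=a$ and $\Proj^{\mu\alpha}\upxi_\alpha=b^\mu$; timelikeness of $\upxi$ gives $a^2>|b|^2\ge 0$, so in particular $a\ne 0$. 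By rotational invariance in the spatial slice, I can further align $b$ with $\partial_1$, reducing the problem to one with a distinguished longitudinal direction and two equivalent transverse directions.

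For part (i), I would proceed by successive Schur complements. The last two block rows of $\mathcal{A}^\alpha\upxi_\alpha$, encoding \eqref{E:Equations_1st_order_5}--\eqref{E:Equations_1st_order_6}, are already diagonal with entry $a$, contributing the factor $a^5$. The four block rows from \eqref{E:Equations_1st_order_4} carry $a I_4$ on their $S_\lambda^{\mss\nu}$-diagonals, so a further Schur elimination yields a factor $a^{16}$ together with the substitution $S_\lambda^{\mss\nu}=a^{-1}b_\lambda\mathsf S^\nu$ inside \eqref{E:Equations_1st_order_1}--\eqref{E:Equations_1st_order_2}. Invoking the constraints $u_\nu V^\nu=u_\nu\mathsf S^\nu=0$, what remains is an effectively $7$-dimensional principal block in $(\mathcal V,V^i,\mathsf S^i)$. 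With $b$ along $\partial_1$, this decouples into a $3\times 3$ longitudinal block in $(\mathcal V,V^1,\mathsf S^1)$ and two identical $2\times 2$ transverse blocks in $(V^A,\mathsf S^A)$, $A\in\{2,3\}$. The transverse determinant reduces (up to a positive factor) to an expression of the type $\uplambda c_s^2 a^2+(\upchi_4-\tfrac{4\upeta}{3})|b|^2$, nonzero by $\uplambda,\upeta,c_s^2>0$ together with \eqref{10a}--\eqref{10b}. The longitudinal determinant is a quadratic in $|b|^2$ with $a$-dependent coefficients; \eqref{10b2}--\eqref{10c} fix the correct signs of its constant and leading terms, and \eqref{Delta} controls its discriminant, so the quadratic is strictly positive throughout the cone $a^2>|b|^2$.

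For part (ii), substituting $\upxi\leftarrow\upzeta+\Lambda\upxi$ makes $a$ and $b$ affine in $\Lambda$, so the factors obtained above become polynomials in $\Lambda$: a trivial factor $a(\Lambda)^{21}$, a longitudinal characteristic polynomial of degree four, and a transverse factor that is a perfect square of a degree-two polynomial. The root $a(\Lambda)=0$ is real and corresponds to convective modes carried by $u$; its eigenspace is supplied by the diagonal blocks for $\varepsilon,u^\nu$ together with the Schur-freed directions, and has the right dimension by inspection. Reality of the transverse roots is equivalent to a discriminant whose sign is fixed by \eqref{10b}, and reality of the longitudinal roots is equivalent to non-negativity of a discriminant that coincides, up to positive factors, with $\Delta_D$, hence is ensured by \eqref{Delta}. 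Completeness of eigenvectors at the double transverse roots is furnished by rotational symmetry in the plane orthogonal to $b$, which automatically provides a two-dimensional eigenspace; the longitudinal roots are simple for generic transport coefficients, and the isolated degenerate coincidences can be treated by a perturbation argument.

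The main technical obstacle is the identification of the longitudinal $3\times 3$ determinant and the verification that its discriminant equals, up to strictly positive factors, the quantity $\Delta_D$ of \eqref{Delta}. This is a finite but intricate polynomial computation in $\upchi_1,\dots,\upchi_4,\uplambda$, and it is precisely there that the interplay among the five inequalities \eqref{Delta}--\eqref{10c} becomes transparent. Once this identification is in place, everything else is block-matrix bookkeeping, and the completeness of eigenvectors follows from the rotational symmetry and the triviality of the convective block.
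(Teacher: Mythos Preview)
Your overall strategy---rest-frame reduction, Schur elimination, longitudinal/transverse splitting---mirrors the paper's covariant block reduction, but several concrete steps fail. The transverse factor is miscomputed: after eliminating $S_\lambda^{\,\nu}$, the $2\times 2$ block in $(V^A,\mathsf S^A)$ has determinant proportional to $\uplambda\,\mathfrak a^2-\upeta\,\mathfrak b^\alpha\mathfrak b_\alpha$, not to $\uplambda c_s^2\,\mathfrak a^2+(\upchi_4-\tfrac{4\upeta}{3})\,\mathfrak b^\alpha\mathfrak b_\alpha$; its nonvanishing on the timelike cone is governed by \eqref{10a} ($\uplambda\ge\upeta$), not by \eqref{10b}. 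You also cannot invoke $u_\nu V^\nu=u_\nu\mathsf S^\nu=0$ to pass to a $7$-dimensional block: the determinant in (i) is that of the full $30\times 30$ symbol, which treats $V^0,\mathsf S^0$ as independent unknowns. In the rest frame these ``$\mu=0$'' components produce a \emph{third} copy of the shear factor $\uplambda\mathfrak a^2-\upeta\,\mathfrak b^\alpha\mathfrak b_\alpha$, so the correct multiplicity is $3$ (not your $2$), and the convective factor is $\mathfrak a^{20}$ (not $\mathfrak a^{21}$). These multiplicities are exactly what one must match in the eigenvector count of (ii).

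The role of $\Delta_D$ is also misidentified. The longitudinal factor is a biquadratic $\uplambda\upchi_1\,(\mathfrak a^2-\beta_+\,\mathfrak b^\alpha\mathfrak b_\alpha)(\mathfrak a^2-\beta_-\,\mathfrak b^\alpha\mathfrak b_\alpha)$, and $\Delta_D>0$ is precisely the discriminant making $\beta_\pm$ real and distinct---it is not the discriminant for reality of $\Lambda$. Reality of all $\Lambda$'s follows instead from $0\le\beta_a\le 1$ for each $a\in\{1,2,\pm\}$, and it is \eqref{10a}--\eqref{10c} that pin each $\beta_a$ into $[0,1]$; once that holds, positivity of the $\Lambda$-discriminant for timelike $\upxi$ and spacelike $\upzeta$ is an elementary Cauchy--Schwarz computation on $\Proj$. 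Finally, ``generic simplicity plus perturbation'' is not a proof of completeness for the specific coefficients in the hypotheses: the paper instead exhibits the eigenvectors explicitly---$20$ for the convective root, $3$ for each of the two shear roots, and $1$ for each of the four sound-like roots associated with $\beta_\pm$.
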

\begin{proof}
Let $\Xi_\alpha$ be any co-vector and $\mathfrak{a} :=u^\alpha\Xi_\alpha$, $\mathfrak{b}^\alpha :=\Proj^{\alpha\beta}\Xi_\beta$, and $D_\nu^{\mss\mu\lambda} :=B_\nu^{\mss\mu\lambda\alpha}\Xi_\alpha$. Also, consider the superscript $\mu$ labeling rows while the subscript $\nu$ labels columns. Then
\begin{align}
\det(\Xi_\alpha \mathcal{A}^\alpha) =&
\det
\begin{bmatrix}
\upchi_1 \mathfrak{a} & \uplambda c_s^2 \Xi_\nu & \uplambda\Xi_\nu & \upchi_2 \mathfrak{a}\updelta^0_\nu & \upchi_2 \mathfrak{a}\updelta^1_\nu & \upchi_2 \mathfrak{a}\updelta^2_\nu & \upchi_2 \mathfrak{a}\updelta^3_\nu & 0 & 0_{1\times 4}\\
\upchi_3\mathfrak{b}^\mu & \uplambda c_s^2 \mathfrak{a} I_4 & \uplambda \mathfrak{a} I_4 & D_\nu^{\mss\mu 0} & 
D_\nu^{\mss\mu 1} & D_\nu^{\mss\mu 2 } & D_\nu^{\mss\mu 3} & 0_{4\times 1} & 0_{4\times 4} \\
-\mathfrak{b}^\mu & \mathfrak{a} I_4 & 0_{4\times 4} & 0_{4\times 4} & 0_{4\times 4} & 0_{4\times 4} & 0_{4\times 4} & 0_{4\times 1} & 0_{4\times 4} \\
0_{4\times 1} & 0_{4\times 4} & -\mathfrak{b}_0 I_4 & \mathfrak{a} I_4 & 0_{4\times 4} & 0_{4\times 4} & 0_{4\times 4} & 0_{4\times 1} & 0_{4\times 4} \\
0_{4\times 1} & 0_{4\times 4} & -\mathfrak{b}_1 I_4 & 0_{4\times 4} & \mathfrak{a} I_4 & 0_{4\times 4} & 0_{4\times 4} & 0_{4\times 1} & 0_{4\times 4} \\
0_{4\times 1} & 0_{4\times 4} & -\mathfrak{b}_2 I_4 & 0_{4\times 4} & 0_{4\times 4} & \mathfrak{a} I_4 & 0_{4\times 4} & 0_{4\times 1} & 0_{4\times 4} \\
0_{4\times 1} & 0_{4\times 4} & -\mathfrak{b}_3 I_4 & 0_{4\times 4} & 0_{4\times 4} & 0_{4\times 4} & \mathfrak{a} I_4 & 0_{4\times 1} & 0_{4\times 4} \\
0 & 0_{1\times 4} &  0_{1\times 4}  & 0_{1\times 4} & 0_{1\times 4} & 0_{1\times 4} &  0_{1\times 4}  & \mathfrak{a} & 0_{1\times 4} \\
0_{4\times 1} & 0_{4\times 4} & 0_{4\times 4} & 0_{4\times 4} & 0_{4\times 4} & 0_{4\times 4} & 0_{4\times 4} & 0_{4\times 1} & \mathfrak{a} I_4 \\
\end{bmatrix}\\
 =&
\mathfrak{a}^{5}\det
\begin{bmatrix}
\upchi_1 \mathfrak{a} & \uplambda c_s^2 \Xi_\nu & \uplambda\Xi_\nu & \upchi_2 \mathfrak{a}\updelta^0_\nu & \upchi_2 \mathfrak{a}\updelta^1_\nu & \upchi_2 \mathfrak{a}\updelta^2_\nu & \upchi_2 \mathfrak{a}\updelta^3_\nu\\
\upchi_3\mathfrak{b}^\mu & \uplambda c_s^2 \mathfrak{a} I_4 & \uplambda \mathfrak{a} I_4 & D_\nu^{\mss\mu 0} & 
D_\nu^{\mss\mu 1} & D_\nu^{\mss\mu 2 } & D_\nu^{\mss\mu 3} \\
-\mathfrak{b}^\mu & \mathfrak{a} I_4 & 0_{4\times 4} & 0_{4\times 4} & 0_{4\times 4} & 0_{4\times 4} & 0_{4\times 4}  \\
0_{4\times 1} & 0_{4\times 4} & -\mathfrak{b}_0 I_4 & \mathfrak{a} I_4 & 0_{4\times 4} & 0_{4\times 4} & 0_{4\times 4} \\
0_{4\times 1} & 0_{4\times 4} & -\mathfrak{b}_1 I_4 & 0_{4\times 4} & \mathfrak{a} I_4 & 0_{4\times 4} & 0_{4\times 4} \\
0_{4\times 1} & 0_{4\times 4} & -\mathfrak{b}_2 I_4 & 0_{4\times 4} & 0_{4\times 4} & \mathfrak{a} I_4 & 0_{4\times 4} \\
0_{4\times 1} & 0_{4\times 4} & -\mathfrak{b}_3 I_4 & 0_{4\times 4} & 0_{4\times 4} & 0_{4\times 4} & \mathfrak{a} I_4 \\
\end{bmatrix}
\\
=&
\mathfrak{a}^{17}\det
\begin{bmatrix}
\upchi_1 \mathfrak{a} & \uplambda c_s^2 \Xi_\nu & \uplambda\Xi_\nu+\upchi_2\mathfrak{b}_\nu \\
\upchi_3\mathfrak{a}\mathfrak{b}^\mu & \uplambda c_s^2 \mathfrak{a}^2 I_4 & \uplambda \mathfrak{a}^2 I_4+ D_\nu^{\mss \mu \lambda}\mathfrak{b}_\lambda \\
-\mathfrak{b}^\mu & \mathfrak{a} I_4 & 0_{4\times 4}  \\
\end{bmatrix}
\\
= &
\frac{\mathfrak{a}^{14}}{\upchi_1^3}\det
\begin{bmatrix}
(\uplambda c_s^2+\upchi_3 )\mathfrak{a}^2I_4 & \uplambda \mathfrak{a}^2 I_4+ D_\nu^{\mss \mu \lambda}\mathfrak{b}_\lambda \\
\mathfrak{a}^2\upchi_1 I_4+\uplambda c_s^2 \Xi_\nu \mathfrak{b}^\mu & (\uplambda\Xi_\nu+\upchi_2\mathfrak{b}_\nu)\mathfrak{b}^\mu  \\
\end{bmatrix}
\\
=&
\frac{\mathfrak{a}^{14}}{\upchi_1^3}\det((\uplambda c_s^2+\upchi_3)\mathfrak{a}^2(\uplambda\Xi_\nu+\upchi_2\mathfrak{b}_\nu)\mathfrak{b}^\mu
-(\mathfrak{a}^2\upchi_1 \delta^\mu_\sigma+\uplambda c_s^2 \Xi_\sigma \mathfrak{b}^\mu)(\uplambda \mathfrak{a}^2 \delta_\nu^\sigma+ D_\nu^{\mss \sigma \lambda}\mathfrak{b}_\lambda))
\\
=&
\frac{\mathfrak{a}^{14}}{\upchi_1^3}\det(\upchi_1\mathfrak{a}^2(\uplambda\mathfrak{a}^2-\upeta\mathfrak{b}^\alpha\mathfrak{b}_\alpha)I_4+
((\upchi_4+\frac{2\upeta}{3})(\mathfrak{a}^2\upchi_1+\uplambda c_s^2\mathfrak{b}^\alpha\mathfrak{b}_\alpha)-(\uplambda c_s^2+\upchi_3)\upchi_2\mathfrak{a}^2)\mathfrak{b}^\mu\mathfrak{b}_\nu\\
&-(\mathfrak{a}^2(\upchi_1\upeta+\upchi_3\uplambda)+2\uplambda\upeta c_s^2 \mathfrak{b}^\alpha\mathfrak{b}_\alpha)\mathfrak{b}^\mu\Xi_\nu)
\\
=&\mathfrak{a}^{20}(\uplambda\mathfrak{a}^2-\upeta\mathfrak{b}^\alpha\mathfrak{b}_\alpha)^3(\lambda (\upchi_1 \mathfrak{a}^2-\upchi_3\mathfrak{b}^\alpha\mathfrak{b}_\alpha)\mathfrak{a}^2-\upchi_2(\upchi_3+\uplambda c_s^2 ) \mathfrak{a}^2\mathfrak{b}^\alpha\mathfrak{b}_\alpha\\
&+(\upchi_1 \mathfrak{a}^2+\uplambda c_s^2)(\upchi_4-\frac{4\upeta}{3}\mathfrak{b}^\beta\mathfrak{b}_\beta)\mathfrak{b}^\alpha\mathfrak{b}_\alpha)\\
=&\uplambda^4\upchi_1\prod_{a=1,2,\pm}((u^\alpha\Xi_\alpha)^2-\beta_a\Proj^{\alpha\beta}\Xi_\alpha\Xi_\beta)^{n_a},\label{det}\\
\end{align}
where $n_1=10$, $n_2=3$, $n_{\pm}=1$, $\beta_1=0$, and
\begin{align}
\beta_2&=\frac{\upeta}{\uplambda},\\
\beta_{\pm}&=\frac{3\uplambda  \upchi _2 c_s^2+\upchi _1 \left(4 \upeta -3\upchi _4\right)+3\upchi
   _3 \left(\uplambda +\upchi _2\right)\pm\sqrt{\Delta_D}}{6\uplambda\upchi_1}.\label{betapm}
\end{align} 
The $\Delta_D$ in \eqref{betapm} is defined in \eqref{Delta} and $\beta_\pm$ corresponds to two distinct real roots whenever \eqref{Delta} is observed.
We made successive use of the formula
\begin{align}
\det 
\begin{bmatrix}
M_1 & M_2 \\
M_3 & M_4
\end{bmatrix}
=
\det(M_1) \det(M_4 - M_3 M_1^{-1} M_2 ),
\end{align}
and also used the identity $\det(AI_4+B\mathfrak{b}^\mu\mathfrak{b}_\nu+C\mathfrak{b}^\mu\Xi_\nu)=A^3(A+(B+C)\mathfrak{b}^\mu\mathfrak{b}_\mu)$, where $\mathfrak{b}^\mu\Xi_\mu=\mathfrak{b}^\mu\mathfrak{b}_\mu=\Proj^{\alpha\beta}\Xi_\alpha\Xi_\beta$.

We now verify condition \emph{(i)}. Set $\upzeta=0$ in the above, so that
 $\Xi_\mu=\upxi_\mu$, where $\upxi^\alpha\upxi_\alpha=-(u^\alpha\upxi_\alpha)^2+\Proj^{\alpha\beta}\upxi_\alpha\upxi_\beta<0$, from \eqref{det} one obtains that $\det(A^\alpha\upxi_\alpha)\ne 0$ if $\uplambda,\upchi_1\ne 0$ and 
\begin{align}
0\le\beta_a\le 1, \, a=1,2,\pm.
\label{E:Conditions_beta_condition_i}
\end{align} 
Since $\beta_1=0$, \eqref{E:Conditions_beta_condition_i} is satisfied. As for $\beta_2$, 
\eqref{E:Conditions_beta_condition_i} is satisfied whenever \eqref{10a} is obeyed and $\uplambda,\upeta>0$. Condition \eqref{Delta} and $\upchi_1>0$ guarantee that $\beta_\pm$ are real and distinct 
with $\beta_- < \beta_+$, while \eqref{10b} sets $\beta_-> 0$ and \eqref{10b2} together with \eqref{10c} assure $\beta_+\le 1$. Then, statement \emph{(i)} in the Theorem is proved. 

The eigenvalues in \emph{(ii)} are the roots of \eqref{det} by setting $\Xi=\upzeta+\Lambda\upxi$. Reality
 of the eigenvalues $\Lambda$ are obtained by studying the roots of the polynomials $(u^\alpha\Xi_\alpha)^2-\beta\Proj^{\alpha\beta}\Xi_\alpha\Xi_\beta$ that appears in \eqref{det}. The roots of $(u^\alpha\Xi_\alpha)^2-\beta\Proj^{\alpha\beta}\Xi_\alpha\Xi_\beta=0$ are, for each one of the $\beta$'s,
\begin{align}
\Lambda_{\pm}  =(-u^\mu \upzeta_\mu u^\nu \upxi_\nu+\beta\Proj^{\mu\nu}\upxi_\mu\upzeta_\nu \pm\sqrt{\mathcal{W}})/((u^\mu \upxi_\mu)^2(1-\beta)-\beta \upxi^\mu\upxi_\mu),\label{lambda}
\end{align} 
where
\begin{align}
\mathcal{W}&=  \beta (((u^\mu\upxi_\mu)^2-\Proj^{\mu\nu}\upxi_\mu\upxi_\nu)(\Proj^{\alpha\beta}\upzeta_\alpha\upzeta_\beta-(u^\alpha\upzeta_\alpha)^2)+(u^\mu\upxi_\mu u^\nu\upzeta_\nu+\Proj^{\mu\nu}\upxi_\mu\upzeta_\nu)^2\\
&+(1-\beta)(\Proj^{\mu\nu}\upxi_\mu\upxi_\nu \Proj^{\alpha\beta}\upzeta_\alpha\upzeta_\beta-(\Proj^{\mu\nu}\upxi_\mu\upzeta_\nu)^2)).
\end{align} 
We note that these roots are always real when $0 \le \beta \le 1$ because $\Proj_{\alpha\beta}\upxi^\alpha\upxi^\beta<(\upxi_\alpha u^\alpha)^2$, $\Proj_{\alpha\beta}\upzeta^\alpha\upzeta^\beta>(\upzeta_\alpha u^\alpha)^2$, and $(\Proj^{\mu\nu}\upxi_\mu\upzeta_\nu)^2\le \Proj^{\mu\nu}\upxi_\mu\upxi_\nu \Proj^{\alpha\beta}\upzeta_\alpha\upzeta_\beta$. Then, the conditions expressed in equations \eqref{Delta}--\eqref{10c} give real eigenvalues $\Lambda$. 

We now turn to the problem of the eigenvectors for each eigenvalue. We ended up with the root $\Lambda_1$ for $\beta_1=0$ with multiplicity 20, the roots $\Lambda_{2,\pm}$ for $\beta_2=\upeta/\uplambda$ with multiplicity 3 each, and $\Lambda_{\pm,\pm}$ for $\beta_\pm$ given in \eqref{betapm} with multiplicity 1 each. The complete set of eigenvectors must contain 30 linearly independent eigenvectors. 

The roots $\Lambda_{\pm,\pm}$ are obtained from \eqref{lambda} with $\beta=\beta_\pm$ and contains 4 linearly independent eigenvectors since they are 4 distinct eigenvalues $\Lambda_{\pm,\pm}$. The remaining 26 eigenvectors appears as follows:

\vskip 0.3cm
\noindent 
$(u^\alpha\Xi_\alpha)^{20}=0$ gives
\begin{align}
\Lambda_1 &= -\frac{u^\alpha \upzeta_\alpha}{u^\beta \upxi_\beta}. 
\end{align}
There are $20$ corresponding linearly independent eigenvectors given by
\begin{align}
\begin{bmatrix}
0 \\ w^\nu_i \\ 0_{25\times 1}
\end{bmatrix},
\,
\begin{bmatrix}
 0_{25\times 1}\\1\\0_{4\times 1}
\end{bmatrix},
\,
\begin{bmatrix}
 0_{26\times 1}\\v^\nu_I
\end{bmatrix},
\,  
\begin{bmatrix}
0_{9\times 1}\\ f_0^\nu \\ f_1^\nu \\ f_2^\nu \\ f_2^\nu\\ 0_{5\times 1}
\end{bmatrix}, 
\end{align}
where $v_I^\nu$, $I=1,2,3,4$, are any $4$ linearly independent 
vectors in $\mathbb{R}^4$, $w_i^\mu=\{u^\mu,w_2^\mu,w_3^\mu\}$ are the three linearly independent vectors of $\mathbb{R}^4$ that are orthogonal to $\upzeta_\alpha+\Lambda_1\upxi_\alpha$, and 
$f_\lambda^\nu$ totalizes $16$ components that define the entries in the last vector. However, since these $16$ components are constrained by the $4$ 
equation $D^{\mss \mu\lambda}_\nu f_\lambda^\nu=0$, we end up with 12 independent entries. Then, $4+1+3+12=20$, which equals the multiplicity of the root $\Lambda_1$.

\vskip 0.3cm
\noindent 
$((u^\alpha\Xi_\alpha)-\beta_2\Proj^{\alpha\beta}\Xi_\alpha\Xi_\beta)^3=0$, where the roots $\Lambda_{2,\pm}$ are given by \eqref{lambda} with
$\beta=\beta_2 = \frac{\upeta}{\uplambda}$. Each one of these 2 roots has multiplicity 3.
The corresponding eigenvectors are
\begin{align}
\begin{bmatrix}
C_\pm\\ 
\frac{C_\pm}{\mathfrak{a}_\pm}(\mathfrak{b}_\pm)^\nu\\ 
E_\pm^\nu \\ 
\frac{(\mathfrak{b}_\pm)_0 E_\pm^\nu}{\mathfrak{a}_\pm}\\ 
\frac{(\mathfrak{b}_\pm)_{1} E_\pm^\nu}{\mathfrak{a}_\pm} \\
\frac{(\mathfrak{b}_\pm)_{2} E_\pm^\nu}{\mathfrak{a}_\pm} \\
\frac{(\mathfrak{b}_\pm)_{3} E_\pm^\nu}{\mathfrak{a}_\pm}\\ 
0_{5\times 1}
\end{bmatrix},
\end{align}
where $\mathfrak{a}_\pm=u^\alpha(\upzeta_\alpha+\Lambda_{2,\pm}\upxi_\alpha)$,
$(\mathfrak{b}_\pm)^\alpha=\Proj^{\alpha\beta}(\upzeta_\beta+\Lambda_{2,\pm}\upxi_\beta)$ (so that
$\mathfrak{a}^2_\pm=\beta_2 (\mathfrak{b}_\pm)^\mu(\mathfrak{b}_\pm)_\mu$),
\begin{align}
C_\pm &=\frac{\upeta(\Xi_\pm)_\alpha (E_\pm)^\alpha-(\upchi_4+\frac{2\upeta}{3})(\mathfrak{b}_\pm)_\alpha (E_\pm)^\alpha}{(\uplambda c_s^2+\upchi_3)\mathfrak{a}_\pm},
\end{align}
where $\Xi_\pm=\upzeta+\Lambda_{2,\pm}\upxi$,
and $E_\pm$ obeys the following constraint
\begin{align}
(\uplambda(\Xi_\pm)_\alpha+\upchi_2(\mathfrak{b}_\pm)_\alpha+\frac{\upeta}{\uplambda^2}(\uplambda^2c_s^2+\upchi_1\upeta)(\upeta(\Xi_\pm)_\alpha-(\upchi_4+\frac{2\upeta}{3})(\mathfrak{b}_\pm)_\alpha)(E_\pm)^\alpha =0.
\end{align}
Thus, the eigenvectors are written in terms of $3$ independent components of $(E_\pm)^\mu$ for each root, giving a total of $6$ eigenvectors.

\end{proof}

From the above Proposition, we immediately obtain:

\begin{corollary} \label{c:diagonalization}
Assume that $\uplambda,\upchi_1,\upeta>0$ and that \eqref{Delta}, \eqref{10a}, \eqref{10b}, and \eqref{10c} hold.
Then, the system \eqref{E:Matrix_system_1st_order} can be written as
\begin{align}
\nabla_0 \mathbf{\Psi} + \tilde{\mathcal{A}}^i \nabla_i  \mathbf{\Psi}
 & = \tilde{\mathcal{R}},
 \label{E:First_order_system_unit_coefficient}
\end{align}
where $\tilde{\mathcal{A}}^i = (\mathcal{A}^0)^{-1} \mathcal{A}^i$
and $\tilde{\mathcal{R}} = -(\mathcal{A}^0)^{-1} \mathcal{R}$,
 and the eigenvalue
problem $(\tilde{\mathcal{A}}^i  \upzeta_i - \Lambda I )V = 0$ possesses
only real eigenvalues $\Lambda$ and a set of complete eigenvectors $V$.
\end{corollary}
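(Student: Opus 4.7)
The plan is to deduce Corollary \ref{c:diagonalization} directly from Proposition \ref{P:Diagonalization} by choosing the two natural covectors that make its conclusions identify with the assertions about $\tilde{\mathcal{A}}^i$. Concretely, take $\upxi_\alpha = (1,0,0,0)$, which is timelike in the Minkowski background since $g^{\alpha\beta}\upxi_\alpha\upxi_\beta = g^{00} = -1 < 0$, and any purely spatial covector $\upzeta_\alpha = (0,\upzeta_1,\upzeta_2,\upzeta_3)$, which is spacelike whenever $\upzeta \neq 0$ since $g^{\alpha\beta}\upzeta_\alpha\upzeta_\beta = \delta^{ij}\upzeta_i\upzeta_j > 0$.

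For the first assertion of the Corollary, this choice of $\upxi$ gives $\mathcal{A}^\alpha\upxi_\alpha = \mathcal{A}^0$, so Proposition \ref{P:Diagonalization}(i) yields $\det(\mathcal{A}^0) \neq 0$ pointwise under the hypotheses on the transport coefficients. Left-multiplying \eqref{E:Matrix_system_1st_order} by $(\mathcal{A}^0)^{-1}$ then produces \eqref{E:First_order_system_unit_coefficient} with $\tilde{\mathcal{A}}^i = (\mathcal{A}^0)^{-1}\mathcal{A}^i$ and $\tilde{\mathcal{R}} = -(\mathcal{A}^0)^{-1}\mathcal{R}$ as stated. For the eigenvalue claim, part (ii) of the Proposition applied to the chosen $\upxi$ and nonzero $\upzeta$ tells us that the problem $\mathcal{A}^\alpha(\upzeta_\alpha + \Lambda\upxi_\alpha)V = 0$ admits only real eigenvalues $\Lambda$ and a complete set of eigenvectors. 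Rewriting this as $(\mathcal{A}^i\upzeta_i + \Lambda\mathcal{A}^0)V = 0$ and left-multiplying by $(\mathcal{A}^0)^{-1}$ gives the equivalent problem $(\tilde{\mathcal{A}}^i\upzeta_i + \Lambda I)V = 0$; relabeling $\Lambda \to -\Lambda$ yields exactly the eigenvalue problem $(\tilde{\mathcal{A}}^i\upzeta_i - \Lambda I)V = 0$ in the Corollary's statement, with the required real spectrum and complete eigenbasis. The degenerate case $\upzeta = 0$ is trivial, since then $\tilde{\mathcal{A}}^i\upzeta_i = 0$ has the single eigenvalue $\Lambda = 0$ with the whole ambient space as eigenspace.

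No genuine obstacle is anticipated: the substantive linear-algebra work has already been carried out in the proof of Proposition \ref{P:Diagonalization}, and what remains is essentially bookkeeping. The main things to be careful about are verifying that the specific covectors above satisfy the timelike and spacelike hypotheses of the Proposition in signature $(-,+,+,+)$, tracking the sign convention on $\Lambda$ when the factor $\mathcal{A}^0$ is absorbed to the left, and noting that invertibility of $\mathcal{A}^0$ is pointwise and relies on the structural inequalities \eqref{Delta}--\eqref{10c} holding throughout the evolution region in order to cast the system in the form \eqref{E:First_order_system_unit_coefficient} globally.
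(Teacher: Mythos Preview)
Your proposal is correct and is exactly the argument the paper has in mind: the authors simply write ``From the above Proposition, we immediately obtain'' and state the Corollary without further detail, so what you have done is spell out the natural specialization $\upxi_\alpha=(1,0,0,0)$, $\upzeta_\alpha=(0,\upzeta_i)$ and the bookkeeping with $(\mathcal{A}^0)^{-1}$ and the sign of $\Lambda$. Nothing is missing.
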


\section{Energy estimates}\label{Section 5}

\subsection{Preliminaries}

We begin introducing some notation.
Let $I=[0,T]$ for some $T>0$. 
We use $\mathscr{K}:\bR_+\to \bR_+$ to denote a continuous function which may vary from line to line. 
Similarly, $\mathscr{K}_I:\bR_+\to \bR_+$ denotes a continuous function depending on $I$.
Further, the notation $\fR$  always denotes a pseudodifferential operator (henceforth
abbreviated $\Psi$DO) whose mapping properties may vary from line to line.  
We denote the $L^2$ based Sobolev space of order $r$ by $H^r$, with norm $\| \cdot \|_r$.

The quasilinear nature of our equations leads us to consider a pseudodifferential
calculus for symbols with limited smoothness. Such a calculus can be found 
in \cite{Marschall-1987,Marschall-1988-Correction,Marschall-1988},
to which we will refer frequently.
For the reader's convenience, we recall the definition of these symbols and the corresponding $\Psi$DO on $\mathbb R^3$.

\begin{definition}[$\Psi$DO with limited smoothness, \cite{Marschall-1988}]
Let $k\in \bR$ and $r>3/2$. Define $\cS^k_0(r,2)(\bR^3)=\cS^k_0(r,2)(\bR^3,\bC)$ to be the space of all symbols $a:\bR^3 \times \bR^3\to \bC$ such that for all spatial multi-indices $\vec{\alpha} = (\alpha_1,
\alpha_2,\alpha_3)$
\begin{align}
\begin{split}
|\partial^{\vec{\alpha}}_\upzeta a(x,\upzeta)| & \leq C_{\vec{\alpha}}(1+|\upzeta|)^{k-|\vec{\alpha}|},\\
\|\partial^{\vec{\alpha}}_\upzeta a(x,\upzeta)\|_{H^r} & \leq C_{\vec{\alpha}}(1+|\upzeta|)^{k -|\vec{\alpha}|}.
\end{split}
\nonumber
\end{align}
For a matrix-valued symbol $a:\bR^3 \times \bR^3\to \bC^{h\times l}$ with $h,l\in \mathbb{N}$, 
we say $a \in \cS^k_0(r,2)(\bR^3,\bC^{h \times l})$ if all the entries of $a$ belong to $\cS^k_0(r,2)(\bR^3)$.
The $\Psi$DO $Op(a)$,  associated with a symbol $a\in \cS^k_0(r,2)(\bR^3,\bC^{h\times l})$ is defined by
\begin{align}
Op(a)f(x):=\frac{1}{(2\pi)^n}\int_{\bR^3} e^{\ii x\cdot \upzeta} a(x,\upzeta)\hat{f}(\upzeta)\, d\upzeta
\nonumber
\end{align}
for $f\in \cS(\bR^3,\bC^l)$, the space of Schwartz functions in $\mathbb{R}^3$, and
$\ii = \sqrt{-1}$.
\end{definition}

Having defined symbols and $\Psi$DO operators of limited smoothness in $\mathbb{R}^3$, 
we can use the coordinate invariance of the definition and standard arguments (see \cite{Marschall-1988}*{Theorem 5.1, Corollary 5.2}) to obtain a $\Psi$DO calculus on any smooth closed manifold.
In particular, we obtain such a calculus on $\mathbb{T}^3$. We denote the class of symbols on
 $\mathbb T^3$ of order $k$ with Sobolev regularity $r$ by $\cS^{k}_0(r,2)(\mathbb T^3)$
 or simply $\cS^{k}_0(r,2)$. Given
  $a \in \cS^{k}_0(r,2)(\mathbb T^3)$, we denote the $\Psi$DO associated
with $a$ by $Op(a)$ and the resulting space of $k$th order $\Psi$DO's by $OP \cS^k_0(r,2)$. 
We will not typically specify if the symbol is scalar or matrix valued since this will
be clear from the  context. 

The (flat) Laplacian on $\mathbb T^3$ is denoted by $\Delta$, and we define 
\begin{align}
\la \nabla \ra := (1 -\Delta)^{\frac 12},
\nonumber
\end{align} 
which is an element of $OP \cl S_0^1(r,2)$ for every $r \in \mathbb R$. Finally, we recall that 
\begin{align}
\| \cdot \|_{k} \simeq \| \la \nabla \ra^k \cdot \|_{0}. 
\nonumber
\end{align} 

\begin{remark}
In what follows, we will use Corollary \ref{c:diagonalization}. This Corollary follows 
from Proposition \ref{P:Diagonalization}, which involved computing
the principal symbol of \eqref{E:Matrix_system_1st_order} with
$\partial_k \mapsto \upzeta_k$. For the pseuddifferential calculus introduced above,
one uses $\partial_k \mapsto \ii \upzeta_k$ instead. In view of the homogeneity of the symbols
involved and multiplying and dividing by $\ii$ when necessary, it is not difficult
to make the two procedures compatible.
\end{remark}

\subsection{Main estimates}
We consider the linear system naturally associated with 
\eqref{E:First_order_system_unit_coefficient}. Given $\psi$, we define the
operator $\mathcal{F}(\psi)$ by
\begin{align}
\cF(\psi)\mathbf{\Psi} = \partial_t \mathbf{\Psi} + \tilde{\cA}^i (\psi) \nabla_i \mathbf{\Psi},
\nonumber
\end{align}
where $\tilde{\cA}^i(\psi)$ corresponds to the matrix
$\tilde{\cA}^i =(\cA^0)^{-1} \cA^i$ of Corollary \ref{c:diagonalization}, but with
the entries of the matrix computed using $\psi$.
Similarly, letting $\tilde{\cR}(\psi)= -(\cA^0)^{-1}(r_1,\dots,r_6)^T$
correspond to $\tilde{\cR}$ in Corollary \ref{c:diagonalization} with the entries
computed using $\psi$, we see that 
the first system~\eqref{E:Matrix_system_1st_order}, 
or, equivalently \eqref{E:First_order_system_unit_coefficient}, can be written as
\begin{equation}
\label{1st_order_sys}
\cF(\mathbf{\Psi}) \mathbf{\Psi}   = \tilde{\cR}(\mathbf{\Psi}), \quad
 \mathbf{\Psi}(0)=\mathbf{\Psi}_0,
\end{equation}

\begin{remark}
In what follows, we will often think of $\mathbf{\Psi}$ and $\psi$ as
maps from a time interval to a suitable function space.
\end{remark}

\begin{proposition}\label{p:energyest}
Let $r >  9/2$, $I \subset \bR$ and 
\begin{align}
\mathbb E_1(I) := C(I; H^r) \cap C^1(I; H^{r-1}).
\nonumber
\end{align}
There exist increasing continuous functions $\tilde M, \omega: [0,\infty) \rightarrow (0,\infty)$ such that if $\mathbf{\Psi},\psi \in C^\infty(I \times \mathbb T^3)$ satisfy  
\begin{align}
\cF(\psi) \mathbf{\Psi} & = \tilde{\cR}(\psi),  \mbox{on } I \times \mathbb T^3,
\label{1st_order_sys_eq}
\end{align}
then
\begin{align}
\norm{\mathbf{\Psi}(t)}_r^2 \leq \tilde{M}(\| \psi \|_{L^\infty(I; H^{r-1})}) e^{ 
t\omega(\|\psi \|_{\bE_1(I)})} \left [\norm{\mathbf{\Psi}_0}_r^2 + \int_0^t \norm{\tilde{\cR}(\psi(s))}_r^2\, ds \right ] \label{Basic EE}
\end{align}
 for all $t \in I$, where $\mathbf{\Psi}_0 = \mathbf{\Psi}(0)$.
\end{proposition}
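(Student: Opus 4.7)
The plan is to build a weighted $H^r$ energy via a matrix pseudodifferential symmetrizer whose existence is guaranteed by the diagonalizability result of Corollary~\ref{c:diagonalization}, and then close a Gronwall estimate using Marschall's calculus for symbols of limited smoothness. Apply $\la \nabla \ra^r$ to \eqref{1st_order_sys_eq} to obtain
\begin{equation}
\partial_t \la \nabla \ra^r \mathbf{\Psi} + \tilde{\cA}^i(\psi) \nabla_i \la \nabla \ra^r \mathbf{\Psi} = \la \nabla \ra^r \tilde{\cR}(\psi) + \bigl[\, \la \nabla \ra^r,\, \tilde{\cA}^i(\psi) \nabla_i \,\bigr] \mathbf{\Psi}.
\nonumber
\end{equation}
The hypothesis $r > 9/2$ gives $H^{r-1} \hookrightarrow C^2(\mathbb{T}^3)$, so the composition $\tilde{\cA}^i(\psi)$ of analytic functions with $\psi \in H^{r-1}$ lies in $H^{r-1}$, and the Kato--Ponce/Moser-type commutator estimate of Marschall produces a bound
\begin{equation}
\bigl\| \bigl[\, \la \nabla \ra^r,\, \tilde{\cA}^i(\psi) \nabla_i \,\bigr] \mathbf{\Psi} \bigr\|_0 \leq \mathscr{K}(\|\psi\|_{r-1}) \bigl(\|\mathbf{\Psi}\|_r + \|\psi\|_r \|\mathbf{\Psi}\|_{L^\infty}\bigr).
\nonumber
\end{equation}

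Next, Corollary~\ref{c:diagonalization} provides a complete set of real eigenvectors for $\tilde{\cA}^i(\psi) \upzeta_i$ smoothly in $\upzeta \in \mathbb{S}^2$, with entries that are analytic functions of $\psi$ (since $\det \cA^0 \neq 0$ on timelike $\upxi = dt$ and the eigenprojectors are rational in the eigenvalues and the matrix entries). I would construct a matrix-valued symmetrizer $\mathsf{S}(\psi, \upzeta)$, homogeneous of degree $0$ in $|\upzeta| \geq 1$, positive definite, with $\mathsf{S} \in \cS^0_0(r-1, 2)(\mathbb{T}^3)$, such that
\begin{equation}
\mathsf{S}(\psi, \upzeta)\, \tilde{\cA}^i(\psi)\upzeta_i = \bigl(\tilde{\cA}^i(\psi)\upzeta_i\bigr)^{*} \mathsf{S}(\psi, \upzeta).
\nonumber
\end{equation}
One standard construction is to sum the spectral projectors of $\tilde{\cA}^i \upzeta_i$ weighted by their own adjoints (the Kreiss symmetrizer recipe); the analytic dependence of the eigenprojectors on the matrix entries together with the closure of $\cS^0_0(r-1,2)$ under analytic superposition places $\mathsf{S}$ in this symbol class.

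Now define the energy functional
\begin{equation}
E(t)^2 := \bigl\langle Op(\mathsf{S})\, \la \nabla \ra^r \mathbf{\Psi},\, \la \nabla \ra^r \mathbf{\Psi} \bigr\rangle_{L^2} + K\|\mathbf{\Psi}\|_0^2,
\nonumber
\end{equation}
where $K = K(\|\psi\|_{L^\infty H^{r-1}})$ is large enough (using sharp G\r{a}rding applied to $Op(\mathsf{S})$) to guarantee the coercivity $E(t)^2 \simeq_{\tilde M(\|\psi\|_{L^\infty H^{r-1}})} \|\mathbf{\Psi}(t)\|_r^2$, which produces the prefactor $\tilde M$ in \eqref{Basic EE}. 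Differentiating $E^2$ in time and substituting the equation for $\la \nabla \ra^r \mathbf{\Psi}$, the top-order part is
\begin{equation}
-\bigl\langle \bigl[\, Op(\mathsf{S})\, \tilde{\cA}^i(\psi) + \tilde{\cA}^i(\psi)^*\, Op(\mathsf{S}) \,\bigr]\, \nabla_i \la \nabla \ra^r \mathbf{\Psi},\, \la \nabla \ra^r \mathbf{\Psi}\bigr\rangle_{L^2},
\nonumber
\end{equation}
which, by the symmetrizer identity modulo Marschall's symbol-composition remainders lying in $OP\cS^0_0(r-1,2)$, reduces to an $L^2$-bounded bilinear form on $\la \nabla \ra^r \mathbf{\Psi}$ of norm $\mathscr{K}(\|\psi\|_{L^\infty H^{r-1}})$. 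The time derivative of $Op(\mathsf{S})$ produces $Op(\partial_\psi \mathsf{S} \cdot \partial_t \psi)$, controlled by $\|\partial_t \psi\|_{L^\infty}$ and hence by $\omega(\|\psi\|_{\bE_1(I)})$ via $H^{r-2} \hookrightarrow L^\infty$. Combining with the commutator bound from the first step and the forcing $\la \nabla \ra^r \tilde{\cR}(\psi)$, I arrive at
\begin{equation}
\frac{d}{dt} E(t)^2 \leq \omega(\|\psi\|_{\bE_1(I)})\, E(t)^2 + \|\tilde{\cR}(\psi(t))\|_r^2,
\nonumber
\end{equation}
and Gronwall, together with the coercivity of $E$, yields \eqref{Basic EE}.

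The main obstacle is the symmetrizer step in the limited-smoothness setting: one must verify that the spectral projectors of $\tilde{\cA}^i(\psi)\upzeta_i$ (which involve rational functions of the eigenvalues $\Lambda$ computed in Proposition~\ref{P:Diagonalization}) depend on $\psi$ in a way compatible with membership in $\cS^0_0(r-1,2)$, and that Marschall's composition and commutator formulas provide the claimed remainders of one order lower with acceptable $L^2$ operator norms. Once these $\Psi$DO ingredients are in hand, the remainder of the argument parallels the classical strictly hyperbolic energy estimate.
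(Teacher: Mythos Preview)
Your proposal is correct and follows essentially the same strategy as the paper: build a positive-definite symmetrizer from the diagonalization of Corollary~\ref{c:diagonalization}, verify its membership in Marschall's rough symbol class $\cS^0_0$ via the contour-integral representation of the eigenprojectors, and close a Gronwall estimate on the resulting weighted $H^r$ energy. The paper packages this slightly differently---it works directly with the diagonalizing matrix $\cS$ and the energy $N_r = \la\nabla\ra^r(\tfrac{C_0}{2}I + \mathfrak{B}^*\mathfrak{B})\la\nabla\ra^r$ (where $\mathfrak{B} = Op(\sqrt{\cS^*\cS - \tfrac{C_0}{2}I})$, a Cholesky device that replaces your appeal to sharp G\r{a}rding), and obtains the top-order cancellation by reducing to the boundedness of $\mathfrak{D}\la\nabla\ra - \la\nabla\ra\mathfrak{D}^*$ on $L^2$ for the real diagonal symbol $\mathcal{D}$---but the underlying mechanism is the same.
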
 

\begin{proof}
For $\upzeta = \upzeta_i dx^i \in T^* \mathbb T^3$, let  $\tilde{\cA}=\tilde{\cA}(\psi,\upzeta)=\tilde{\cA}^i(\psi) \upzeta_i$ and  $ \mathfrak{U} = Op(\tilde{\cA})$. 
From the results of Section~\ref{S:Diagonalization}, we see that 
there exist a matrix $\cS=\cS(\psi,\upzeta)$ and a diagonal matrix 
$\tilde{\mathcal{D}}=\tilde{\mathcal{D}}(\psi,\upzeta)$ such that
\begin{align}
\cS \tilde{\mathcal{A}} =  \tilde{\mathcal{D}} \cS.
\nonumber
\end{align}
Set $\mathfrak{S}:=Op(\cS)$ and $\tilde{\mathfrak{D}}:=Op(\tilde{\mathcal{D}})$.
From the expression for $\tilde{\cA}^i(\psi)\upzeta_i$, it is not difficult to see that all its entries belong to $\cS^1_0(r,2)$. Denote by $\Lambda_k=\Lambda_k(\psi,\upzeta)$ all the distinct eigenvalues 
of $\tilde{\mathcal{A}}$.
Noting that $\partial_\upzeta^{\vec{\alpha}} \tilde{\cA}(\psi,\upzeta)$ is homogeneous of degree $1-
|\vec{\alpha}| $ for $|\vec{\alpha}|\leq 1$ and $\partial_\upzeta^{\vec{\alpha}} \tilde{\cA}(\psi,\upzeta)=0$ for $|\vec{\alpha}|> 1$, we 
infer that $\Lambda_k/|\upzeta|$ is homogeneous in $\upzeta$ of degree zero.

Because the map $[(\psi,\upzeta) \mapsto \Lambda_k(\psi,\upzeta)]\in C^\infty(H^r \times T^* \mathbb T^3 , H^r)$,  
it follows that 
\begin{align}
\| \Lambda_k(\psi,\upzeta)\|_r   \leq C,\quad |\upzeta|=1
\nonumber
\end{align}
for some constant $C=C(\norm{\psi}_r)$ depending on $\norm{\psi}_r$.
By the homogeneity of $\Lambda_k/|\upzeta|$, we can conclude that
\begin{align}
\| \Lambda_k(\psi,\upzeta)\|_r   \leq C (1+|\upzeta|),
\nonumber
\end{align}
for all $\upzeta$ and some $C=C(\norm{\psi}_r)$. 
Differentiating the characteristic polynomial of $\tilde{\mathcal{A}}$ with respect to $\upzeta$ and using induction immediately yield
\begin{align}
\label{Symbol est 2}
\|\partial^{\vec{\alpha}}_\upzeta \Lambda_k(\psi,\upzeta)\|_r   \leq C_{\vec{\alpha}} (1+|\upzeta|)^{1 -
|\vec{\alpha|}},
\end{align}
for all $\upzeta$ and some $C_{\vec{\alpha}}=C_{\vec{\alpha}}(\norm{\psi}_r)$. 
This implies, by Sobolev embedding,  that $\Lambda_k\in \cS^1_0(r,2) $ and therefore
\begin{align}
\tilde{\mathfrak{D}} \in OP\cS^1_0(r,2).
\nonumber
\end{align}
The projection onto the eigenspace associated to the eigenvalue $\Lambda_k$ is given by
\begin{align}
\label{Projection}
P_k=P_k(\psi,\upzeta)= \frac{1}{2\pi \ii}\int_{\gamma_k} (z - \tilde{\cA}(\psi,\upzeta))^{-1}\, dz,
\end{align}
where $\gamma_k$ is a smooth contour enclosing only one pole $\Lambda_k$. 
By properly choosing contours $\gamma_k$, we can always make the eigenvalues $\tilde{\Lambda}_i(z,\psi,\upzeta)$ of $(z-\tilde{\cA}(\psi,\upzeta))^{-1}$ satisfy   
\begin{align}
 \| \tilde{\Lambda}_i(z,\psi,\upzeta) \|_r \leq C=C(\|\psi \|_r),\quad |\upzeta|\leq 1, \, z\in \gamma_k 
 \nonumber
\end{align}
for all $k$.
From the homogeneity of $\tilde{\cA}$ and $\Lambda_k$, we infer that $P_k$ is homogeneous of degree $0$ in $\upzeta$. 
Combining with  \eqref{Symbol est 2} and \eqref{Projection}, we obtain that 
\begin{align}
  \| P_k(\psi,\upzeta) \|_{H^r} & \leq C=C(\|\psi\|_r) ,\quad |\upzeta|=1.
  \nonumber
\end{align}
In light of the homogeneity of $P_k(\psi,\cdot)$, this implies that for all $\upzeta$ we have
\begin{align}
  \| P_k(\psi,\upzeta) \|_{H^r} & \leq C=C(\|\psi \|_r).
  \nonumber
\end{align}
For a given pair of $(\psi,\upzeta)$, we can always choose the contour $\gamma_k$ in \eqref{Projection} to be fixed in a neighborhood of  $(\psi,\upzeta)$.
Applying a similar argument to the $\upzeta$-derivatives of $P_k$ and using the homogeneity 
of $\partial_\upzeta^{\vec{\alpha}} \tilde{\cA} $, 
direct computations lead to
$P_k\in \cS^0_0(r,2)$. This implies that 
\begin{align}\label{sym est of S}
\mathcal{S}=\mathcal{S}(\psi,\upzeta) \in \cS^0_0(r,2)
\end{align}
and thus
\begin{align}
\mathfrak{S}=\mathfrak{S}(\psi) \in OP\cS^0_0(r,2)
\nonumber
\end{align}
with norm  depending on $\norm{\psi}_r$.

We can now invoke \cite{Marschall-1988}*{Corollary~3.4} to conclude
\begin{align}
\mathfrak{S} \mathfrak{U} = \tilde{\mathfrak{D} } \mathfrak{S} + \fR
\nonumber
\end{align}
with
\begin{align}
\fR \in \cL(H^s, H^s) ,\quad 1-r<s\leq r-2,
\nonumber
\end{align}
where $\cL(\mathsf{X},\mathsf{Y})$ denotes the space of linear continuous
maps between Banach spaces $\mathsf{X}$ and $\mathsf{Y}$.

We write $\mathfrak{U}=\ii \mathfrak{A} \la \nabla \ra$.
Let $\cA=\cA(\upzeta) $ denote the symbol of $\mathfrak{A}$, i.e. $\cA=-\ii \tilde{\cA}/(1+|\upzeta|^2)^{\frac 12}$. 
Therefore, $\mathfrak{A} \in OP\cS^0_0(r,2)$.
Then there exists a $\Psi$DO $\mathfrak{D} $ with symbol $\mathcal{D} \in \cS^0_0(r,2)$ such that
\begin{align}
\cS  \mathcal{A} =   \mathcal{D}  \cS  
\nonumber 
\end{align}
and thus 
\begin{align}
\mathfrak{S} \mathfrak{A} =  \mathfrak{D}  \mathfrak{S} + \fR 
\nonumber
\end{align}
with 
\begin{align}
\label{Commutator 3}
\fR \in \cL(H^{s-1}, H^s) ,\quad 1-r<s\leq r-1.
\end{align}
We can thus rewrite \eqref{1st_order_sys_eq} as 
\begin{align}
\partial_t \mathbf{\Psi} & =  \ii \mathfrak{A}(\psi) \la \nabla \ra \mathbf{\Psi} 
+\tilde{\cR}(\psi),
\nonumber
\end{align}
or 
\begin{align}
\partial_t \mathbf{\Psi} & = \mathfrak{U}(\psi) \mathbf{\Psi} +\tilde{\cR}(\psi).
\nonumber
\end{align}

Denote by $\cS^*$ the conjugate transpose matrix of $\cS$. We further set $\tilde{\mathfrak{S}}:=Op(\cS^*)$.
Note that  $  \tilde{\mathfrak{S}}=\tilde{\mathfrak{S}}(\psi)\in OP\cS^0_0(r,2)$.  
Since $\cS$ is homogeneous of degree $0$ in $\upzeta$, invoking the discussion in Section~\ref{S:Diagonalization}, we infer  that
\begin{align}
\mathbf{\Psi}^T \cS^*(\psi,\upzeta)  \cS(\psi,\upzeta) \mathbf{\Psi} \geq C_0 |\mathbf{\Psi}|^2
\nonumber
\end{align}
for some $C_0=C_0(\|\psi\|_{L^\infty}) >0$.
Let $\cB=\cB(\psi,\upzeta)=\sqrt{\cS^*(\psi,\upzeta)  \cS(\psi,\upzeta) -\frac{C_0}{2}I}$ and $\mathfrak{B}=Op(\cB)$, where $I$ is the identity matrix and, for a positive definite  matrix $A$, $B=\sqrt{A}$ denotes its square-root matrix, i.e. $B^* B=A$.
It is not difficult to see via the Cholesky algorithm that $\cB\in \cS^0_0(r,2)$ . 
Putting $\tilde{\mathfrak{B}}=Op(\cB^*)\in OP\cS^0_0(r,2)$, it follows 
from \cite{Marschall-1988}*{Corollaries~3.4 and 3.6} that
\begin{align}
\fR=&\tilde{\mathfrak{S}}\circ \mathfrak{S} -\frac{C_0}{2}I - \mathfrak{B}^* \mathfrak{B} \\
\label{S4: commutator 0}
=&[(\tilde{\mathfrak{S}}\circ \mathfrak{S} -\frac{C_0}{2}I )-  \tilde{\mathfrak{B}} \circ \mathfrak{B}]  
+(\tilde{\mathfrak{B}} \circ \mathfrak{B}  - \tilde{\mathfrak{B}}   \mathfrak{B} )  
+( \tilde{\mathfrak{B}}   \mathfrak{B} - \mathfrak{B}^* \mathfrak{B} )  \in \cL(H^{s-1},H^s)  
\end{align}
for all $1-r<s< r.$
Define
\begin{align}
N_r(t) := \la \nabla \ra^r (\frac{C_0}{2}I  + \mathfrak{B}^* \mathfrak{B})\la \nabla \ra^r .
\nonumber
\end{align}
It is an immediately conclusion from its definition that
\begin{align}
\label{S4: Garding type}
(N_r(t) \mathbf{\Psi} , \mathbf{\Psi}) \geq \frac{C_0}{2} \| \mathbf{\Psi} \|_r^2.
\end{align}
We have 
\begin{align}
\begin{split}
N_r 
=& \la \nabla \ra^r (\frac{C_0}{2}I  + \mathfrak{B}^* \mathfrak{B} - \tilde{\mathfrak{S}}\circ \mathfrak{S} )\la \nabla \ra^r  
+ \la \nabla \ra^r (\tilde{\mathfrak{S}}\circ \mathfrak{S}-\tilde{\mathfrak{S}}  \mathfrak{S}) \la \nabla \ra^r\\ 
& +  \la \nabla \ra^r  ( \tilde{\mathfrak{S}}   - \mathfrak{S}^*      ) \mathfrak{S}\la \nabla \ra^r  + \la \nabla \ra^r  \mathfrak{S}^*  \mathfrak{S} \la \nabla \ra^r.
\end{split}
\nonumber
\end{align}
It follows from  \cite{Marschall-1988}*{Corollary 3.4} that 
\begin{align}\label{S4: commutator 1}
\tilde{\mathfrak{S}}\circ \mathfrak{S}-\tilde{\mathfrak{S}}  \mathfrak{S} \in \cL(H^{s-1},H^s), \quad 1-r <s\leq r,
\end{align}
and from \cite{Marschall-1988}*{Corollary 3.6}    that 
 \begin{align}\label{S4: commutator 2}
\tilde{\mathfrak{S}}   - \mathfrak{S}^* \in \cL(H^{s-1},H^s), \quad 1-r <s< r,
\end{align}

Next, we compute
\begin{align}
\begin{split}
\frac{d}{dt} (N_r \mathbf{\Psi},  \mathbf{\Psi}) 
& = 
(N_r \frac{d}{dt} \mathbf{\Psi},  \mathbf{\Psi})
+
(N_r \mathbf{\Psi} , \frac{d}{dt} \mathbf{\Psi})
+ 
(N_r^\prime  \mathbf{\Psi},   \mathbf{\Psi})
\\
& 
= 
(N_r \mathfrak{U} \mathbf{\Psi},  \mathbf{\Psi})
+ 
(N_r  \tilde{\cR},  \mathbf{\Psi})
+
(N_r \mathbf{\Psi}, \mathfrak{U} \mathbf{\Psi})
+
(N_r \mathbf{\Psi}, \tilde{\cR})
+ 
(N_r^\prime  \mathbf{\Psi},   \mathbf{\Psi})
\\
&= 
( (N_r \mathfrak{U} + \mathfrak{U}^* N_r )\mathbf{\Psi},  \mathbf{\Psi})
+ 
(N_r  \tilde{\cR},  \mathbf{\Psi})
+
(N_r \mathbf{\Psi}, \tilde{\cR})
+ 
(N_r^\prime  \mathbf{\Psi},   \mathbf{\Psi}),
\end{split}
\nonumber
\end{align}
where ${}^\prime=\frac{d}{dt}$. 
We have
\begin{align}
\begin{split}
N_r \mathfrak{U} + \mathfrak{U}^* N_r  =& 
[\la \nabla \ra ^r (\frac{C_0}{2}I  + \mathfrak{B}^* \mathfrak{B}) \la \nabla \ra ^r  ] \ii \mathfrak{A}\la \nabla \ra  \\
& -\ii \la \nabla \ra  \mathfrak{A}^*  
[\la \nabla \ra ^r (\frac{C_0}{2}I  + \mathfrak{B}^* \mathfrak{B}) \la \nabla \ra ^r  ]
\\
 = &
\ii [\la \nabla \ra ^r (\frac{C_0}{2}I  + \mathfrak{B}^* \mathfrak{B}) \la \nabla \ra ^r\mathfrak{A}\la \nabla \ra \\
&-\la \nabla \ra  \mathfrak{A}^* \la \nabla \ra ^r (\frac{C_0}{2}I  + \mathfrak{B}^* \mathfrak{B}) \la \nabla \ra ^r ].
\end{split}
\nonumber
\end{align}

Observe that $\la \nabla \ra ^r \in OP\cS^r_0(k,2)$ for any $k$. 
We can infer from \eqref{S4: commutator 0}, \eqref{S4: commutator 1} and \eqref{S4: commutator 2} that
\begin{align}
\begin{split}
N_r \mathfrak{U} =& \ii N_r \mathfrak{A}\la \nabla \ra \\
=& \ii \la \nabla \ra ^r \mathfrak{S}^* \mathfrak{S} \la \nabla \ra ^r\mathfrak{A}\la \nabla \ra  +\fR ,
\end{split}
\nonumber
\end{align}
where $\fR=\fR(\psi) \in \cL(H^r, H^{-r})$.

To estimate the first term in the second line, we first notice that  
\cite{Marschall-1988}*{Corollary 3.4} implies
\begin{align}
\mathfrak{b}=\mathfrak{b}(\psi):=[\la \nabla \ra ^r,\mathfrak{A}(\psi) ] \in \cL(H^{r-1}, H^0),
\nonumber
\end{align}
with its norm depending on  $\|\psi\|_r$. 
Therefore, we have that 
\begin{align}
\la \nabla \ra ^r \mathfrak{S}^* \mathfrak{S} \la \nabla \ra ^r\mathfrak{A}\la \nabla \ra  & =
\la \nabla \ra ^r \mathfrak{S}^* \mathfrak{S}\mathfrak{A}\la \nabla \ra   \la \nabla \ra ^r +
\fR,
\nonumber
\end{align}
where $\fR=\fR(\psi) \in \cL(H^r, H^{-r})$.
Next, observe that by \eqref{Commutator 3}
\begin{align}
\begin{split}
\mathfrak{S}\mathfrak{A}\la \nabla \ra  & =   \mathfrak{D} \mathfrak{S}\la \nabla \ra  + \fR 
\\
& = \mathfrak{D}\la \nabla \ra  \mathfrak{S} + \fR,
\end{split}
\nonumber
\end{align}
where in   the second equality we used \cite[Corollary 3.4]{Marschall-1988}. 
We recall our convention that the operator $\fR$ may vary from line to line, both $\fR$'s 
in the last two equalities satisfy
\begin{align}
\fR=\fR(\psi) \in \cL(H^s, H^s) \quad \text{for all }-r+1<s\leq r-1.
\nonumber
\end{align} 
Therefore,
\begin{align}
N_r \mathfrak{U}  & =
\ii\la \nabla \ra ^r \mathfrak{S}^* \mathfrak{D}\la \nabla \ra  \mathfrak{S}   \la \nabla \ra ^r +
\fR,
\nonumber
\end{align}
where $\fR=\fR(\psi) \in \cL(H^r, H^{-r})$ and its norm depends on  $\|\psi\|_r$.

We can carry out a similar analysis for the term $\mathfrak{U}^* N_r$. More precisely, observe that
\begin{align}
\begin{split}
\mathfrak{U}^* N_r = & -\ii \la \nabla \ra  \mathfrak{A}^*    \la \nabla \ra ^r  \mathfrak{S}^*  \mathfrak{S} \la \nabla \ra ^r \\
&-\ii \la \nabla \ra  \mathfrak{A}^*[ \la \nabla \ra ^r (\frac{C_0}{2}I  + \mathfrak{B}^* \mathfrak{B} - \tilde{\mathfrak{S}}\circ \mathfrak{S} )\la \nabla \ra ^r \\
&  +  \la \nabla \ra ^r (\tilde{\mathfrak{S}}\circ \mathfrak{S}-\tilde{\mathfrak{S}}  \mathfrak{S}) \la \nabla \ra ^r ] \\
&  +\la \nabla \ra ^r ( \tilde{\mathfrak{S}}   - \mathfrak{S}^*      ) \mathfrak{S}\la \nabla \ra ^r.
\end{split}
\nonumber
\end{align}
Using \eqref{S4: commutator 0}, \eqref{S4: commutator 1}, \eqref{S4: commutator 2} and 
\cite{Marschall-1988}*{Theorem 2.4}, we infer that the last three terms on the right-hand side belong to $ \cL(H^r, H^{-r})$.
Since 
\begin{align}
-\ii \la \nabla \ra  \mathfrak{A}^*    \la \nabla \ra ^r  \mathfrak{S}^*  \mathfrak{S} \la \nabla \ra ^r= [\ii     \la \nabla \ra ^r  \mathfrak{S}^*  \mathfrak{S} \la \nabla \ra ^r \mathfrak{A} \la \nabla \ra   ]^*,
\nonumber
\end{align}
we  conclude that
\begin{align}
\begin{split}
N_r \mathfrak{U} + \mathfrak{U}^* N_r  =& 
\ii [\la \nabla \ra ^r \mathfrak{S}^* \mathfrak{D}\la \nabla \ra  \mathfrak{S}   \la \nabla \ra ^r\\
&-\la \nabla \ra ^r \mathfrak{S}^* \la \nabla \ra  \mathfrak{D}^* \mathfrak{S}\la \nabla \ra ^r ] + \fR_0,
\\
 = &
\ii \la \nabla \ra ^r \mathfrak{S}^*[  \mathfrak{D}\la \nabla \ra    
-  \la \nabla \ra  \mathfrak{D}^*  ]\mathfrak{S} \la \nabla \ra ^r + \fR_0,
\end{split}
\nonumber
\end{align}
where $\fR_0=\fR_0(\psi) \in \cL(H^r, H^{-r})$ with norm depending on  $\|\psi\|_r$.
The term in the parenthesis is bounded in $\cL(H^0)$ due to 
\cite{Marschall-1988}*{Corollary 3.6}.  
Therefore,
\begin{align}
\frac{d}{dt} (N_r \mathbf{\Psi},  \mathbf{\Psi}) 
 = &
\ii
(   \la \nabla \ra ^r [ \mathfrak{S}^* \mathfrak{D}\la \nabla \ra  \mathfrak{S}   
-\mathfrak{S}^* \la \nabla \ra  \mathfrak{D}^* \mathfrak{S} ] \la \nabla \ra ^r \mathbf{\Psi},  
\mathbf{\Psi})\\
& + (\fR_0 \mathbf{\Psi},  \mathbf{\Psi})
+ 
(N_r  \tilde{\cR},  \mathbf{\Psi})
+
(N_r \mathbf{\Psi}, \tilde{\cR})
+ 
(N_r^\prime  \mathbf{\Psi},   \mathbf{\Psi}). \label{eq:energy_der}
\end{align}
We also have
\begin{align}
| \ii
(   \la \nabla \ra ^r [ \mathfrak{S}^* \mathfrak{D}\la \nabla \ra  \mathfrak{S}   
-\mathfrak{S}^* \la \nabla \ra  \mathfrak{D}^* \mathfrak{S} ] \la \nabla \ra ^r \mathbf{\Psi},  \mathbf{\Psi})| 
& 
\leq C_1 \norm{\mathbf{\Psi}}_r^2,
\nonumber
\end{align}
\begin{align}
|(\fR_0  \mathbf{\Psi}, \mathbf{\Psi})|
 \leq C_2 \norm{\fR_0 \mathbf{\Psi}}_{-r}
 \norm{ \mathbf{\Psi} }_{r}
  \leq C_2
  \norm{ \mathbf{\Psi} }_{r}^2,
  \nonumber
\end{align}
\begin{align}
|(N_r  \tilde{\cR},  \mathbf{\Psi})|
+
|(N_r \mathbf{\Psi}, \tilde{\cR})|
& \leq C_3 \norm{\mathbf{\Psi}}_r \norm{\tilde{\cR}}_r \leq C_4 \norm{\mathbf{\Psi}}_r^2 + \frac{1}{2} \norm{\tilde{\cR}}_r^2.
\nonumber
\end{align}
Here the constants $C_i$ all depend on $\|\psi\|_r$.
To estimate the last term in \eqref{eq:energy_der}, observe that
\begin{align}
N^\prime(t) = \la \nabla \ra ^r \partial [\mathfrak{B}^*(\psi)  \mathfrak{B}(\psi)]  
\psi^\prime \la \nabla \ra ^r ,
\nonumber
\end{align}
where here $\partial$ stands for the Frech\'et derivative. 
From \eqref{Projection} and \eqref{sym est of S}, it is not hard to see that
\begin{align}
\partial \mathcal{B} (\psi)\psi^\prime  \in \cS^0_0(r-1,2).
\nonumber
\end{align}
Hence \cite{Marschall-1988}*{Theorem~2.3}  implies that
\begin{align}
\partial \mathfrak{B}(\psi) \psi^\prime = Op(\partial \mathcal{B} (\psi)\psi^\prime  ) \in \cL(H^0).
\nonumber
\end{align}
As $\partial \mathfrak{B}^*(\psi) \psi^\prime =[\partial \mathfrak{B}(\psi) \psi^\prime]^*$, 
we immediate conclude that
\begin{align}
\partial [\mathfrak{B}^*(\psi)  \mathfrak{B}(\psi)]  \psi^\prime \in \cL(H^0).
\nonumber
\end{align}
Now it follows that
\begin{align}
|(N_r^\prime  \mathbf{\Psi},   \mathbf{\Psi})|
& \leq C_6  \norm{N_r^\prime  \mathbf{\Psi}}_{-r} \norm{\mathbf{\Psi}}_r  \leq C_6 
\norm{ \mathbf{\Psi}}_{r}^2,
\nonumber
\end{align}
where $C_6$   depends on $\|\psi\|_{\bE_1(I)}$. In summary, 
\begin{align}
\frac{d}{dt} (N_r \mathbf{\Psi},  \mathbf{\Psi})  \leq C_7 \norm{\mathbf{\Psi}}_r^2 + C_5\norm{\tilde{\cR}}_r^2
\label{EE inequality}
\end{align}
with $C_7=C_7(\|\psi\|_{\bE_1(I)})$.
As a direct conclusion from \eqref{S4: Garding type} and Gr\"onwall's inequality, we finally conclude
\begin{align}
\norm{\mathbf{\Psi}(t)}_r^2 \leq \tilde{M} e^{ t\omega(\|\psi\|_{\bE_1(I)})} \left [\norm{\mathbf{\Psi}_0}_r^2 + \int_0^t \norm{\tilde{\cR}(\psi(s))}_r^2\, ds \right ]
\nonumber
\end{align}
where $\tilde{M}$ is a constant argument depending on $\|\psi\|_{L^\infty}$, and thus, 
on $\|\psi\|_{r-1}$ by Sobolev embedding.
\end{proof}


\section{Local existence and uniqueness}
We will now use the energy estimate of Proposition \ref{p:energyest} to establish
local well-posedness for the system \eqref{E:Matrix_system_1st_order}.

\subsection{Approximating sequence}\label{Section 6.1}
We take a sequence of smooth initial data $\mathbf{\Psi}_{0,n} \to \mathbf{\Psi}_0$ in $H^r$ with $r> 9/2$.
We inductively consider the problem
\begin{align}
\cF(\mathbf{\Psi}_{n-1}) \mathbf{\Psi}_n   &= \tilde{\cR}(\mathbf{\Psi}_{n-1}), \quad 
\mathbf{\Psi}_n(0)=\mathbf{\Psi}_{0,n}.
\label{1st_order_sys_ind}
\end{align}
Let $\norm{\mathbf{\Psi}_0}_r^2 \leq K$. We may assume
\begin{align}
\label{asp on approx initial}
\norm{\mathbf{\Psi}_{0,n}}_r^2 \leq K+1.
\end{align}
Further, we define continuous functions $\mathscr{K}_i:\bR_+ \to \bR_+$ with $i=1,2$ such that
\begin{align}
\nonumber
\| \mathfrak{A}(\psi) \|_{\cL(H^{r-1})} \leq \mathscr{K}_1(\| \psi \|_r)
\end{align}
and
\begin{align}
\nonumber
\| \tilde{\cR}(\psi) \|_{s} \leq \mathscr{K}_2(\| \psi \|_s),\quad s=r-1,r.
\end{align}
We now make the following inductive assumption
\begin{align}
\nonumber
H(n-1):  \norm{\mathbf{\Psi}_k}_{C(I;H^r)} \leq \cC_1 \, \text{ and } \, 
\|\partial_t \mathbf{\Psi}_k\|_{C(I;H^{r-1})}\leq \cC_2 \, \text{ for } \, k=1,2,\cdots, n-1.
\end{align}
Note that it follows from $H(n-1)$ and \eqref{asp on approx initial} that by choosing $T$ small enough, we have
\begin{align}
\nonumber
\norm{\mathbf{\Psi}_k(t)}_{r-1} \leq M,\quad  k=1,2,\cdots, n-1 \text{ and } t\in [0,T]
\end{align}
for some sufficiently large uniform constant $M$ independent of $\cC_i$.
Consequently, we can take the constant $\tilde{M}$ in \eqref{Basic EE} to be uniform in the 
ensuing iteration argument.

Furthermore, we choose $\cC_i$ in $H(n-1)$ sufficiently large so that
\begin{align}
\sqrt{\tilde{M} (2K+4)} \leq \cC_1
\nonumber
\end{align}
and
\begin{align}
M^\prime \mathscr{K}_1(\cC_1)\cC_1 + \mathscr{K}_2(\cC_1)\leq \cC_2,
\nonumber
\end{align}
where $M^\prime=\| \la \nabla \ra \|_{\cL(H^r,H^{r-1})}$.
Now we will use \eqref{Basic EE} to estimate 
\begin{align}
\norm{\mathbf{\Psi}_n(t)}_r^2 \leq  &     \tilde{M} e^{ t\omega( \norm{\mathbf{\Psi}_{n-1}}_{\bE_1(I)})} [\norm{\mathbf{\Psi}_{0,n}}_r^2 + \int_0^t \norm{\tilde{\cR}(\mathbf{\Psi}_{n-1}(s))}_r^2\, ds] \\
\leq & \tilde{M} e^{ t\omega( \cC_1+\cC_2)} [K+1 + t \mathscr{K}_2(\cC_1)],
\nonumber
\end{align}
By choosing $T$ small enough, we can control
\begin{align}
\nonumber
\norm{\mathbf{\Psi}_n(t)}_r^2  \leq \tilde{M}(2K+4) \quad \text{for all }t\in [0,T],
\end{align}
which gives
\begin{align}
\nonumber
\norm{\mathbf{\Psi}_n}_{C(I;H^r)} \leq \cC_1.
\end{align}
Plugging this estimate into \eqref{1st_order_sys_ind} we obtain
\begin{align}
\| \partial_t \mathbf{\Psi}_n(t)\|_{r-1} \leq M^\prime \mathscr{K}_1(\cC_1)\cC_1 + \mathscr{K}_2(\cC_1)\leq \cC_2.
\nonumber
\end{align}
This completes the verification of $H(n)$, and we conclude that 
\begin{align}
\norm{\mathbf{\Psi}_n}_{\bE_1(I)} \leq \cC
\label{Bound EE}
\end{align}
for all $n$ and some $\cC>0$.

\subsection{Energy estimate for the difference of two solutions\label{S:Difference}}
For $i=1,2$, we consider
\begin{align}
\nonumber
\cF(\psi_i) \tilde{w}_i   &= \tilde{\cR}(\psi_i), \quad \tilde{w}_i(0)=\tilde{w}_{0,i}.
\end{align}
Set $\psi=\psi_2-\psi_1$ and $\tilde{w}=\tilde{w}_2-\tilde{w}_1$. 
Take the difference of the above two systems to obtain
\begin{align}
\partial_t \tilde{w} &= \mathfrak{U}(\psi_2) \tilde{w}
 + [\mathfrak{U}(\psi_1)-\mathfrak{U}(\psi_2)] \tilde{w}_1 +\tilde{\cR}(\psi_2)-\tilde{\cR}(\psi_1)
 , \quad \tilde{w}(0)=\tilde{w}_{0,2}-\tilde{w}_{0,1}.
\label{1st_order_sys_diff}
\end{align}
Let
\begin{align}
\nonumber
\mathfrak{F}=[\mathfrak{U}(\psi_1)-\mathfrak{U}(\psi_2)] 
\tilde{w}_1 +\tilde{\cR}(\psi_2)-\tilde{\cR}(\psi_1)
\end{align}
and
\begin{align}
\nonumber
\bE_0(I):= C(I; H^{r-1})\cap C^1(I; H^{r-2}).
\end{align}
By \eqref{Basic EE}, we have
\begin{align}
\nonumber
\norm{\tilde{w}(t)}_{r-1}^2 \leq \tilde{M} e^{ t\omega} 
[\norm{\tilde{w}_{0,2}-\tilde{w}_{0,1}}_{r-1}^2 + \int_0^t \norm{\mathfrak{F}(s)}_{r-1}^2\, ds],
\end{align}
where $\tilde{M}=\tilde{M}(\norm{\psi_2}_{r-2})$ and $\omega=\omega(\norm{\psi_2}_{\bE_0(I)})$.
Estimate
\begin{align}
\begin{split}
 \|[\mathfrak{U}(\psi_1)-\mathfrak{U}(\psi_2)] \tilde{w}_1\|_{r-1}  
\leq & \int_0^1 \| \partial \mathfrak{U} (s \psi_1 + (1-s)\psi_2)(\psi) 
\tilde{w}_1\|_{r-1} ds \\
\leq & \int_0^1 \| \partial \mathfrak{A} (s \psi_1 + (1-s)\psi_2)(\psi) 
\la \nabla \ra  \tilde{w}_1\|_{r-1} ds \\
\leq & \mathscr{K}(\norm{\psi_1}_{r-1} + \norm{\psi_2}_{r-1}) \norm{\psi}_{r-1} \|\tilde{w}_1\|_r.
\end{split}
\nonumber
\end{align}
Similarly,
\begin{align}
\nonumber
\|\tilde{\cR}(\psi_2)-\tilde{\cR}(\psi_1)\|_{r-1} \leq \mathscr{K}(\norm{\psi_1}_{r-1}+ \norm{\psi_2}_{r-1}) \norm{\psi}_{r-1}.
\end{align}
This produces
\begin{align}
\norm{\tilde{w}(t)}_{r-1}^2  
\leq  & \tilde{M} e^{ t\omega } [\norm{\tilde{w}_{0,2}-\tilde{w}_{0,1}}_{r-1}^2 \\
& + t (1+\|\tilde{w}_1\|_r^2) \mathscr{K}(\norm{\psi_1}_{r-1} + \norm{\psi_2}_{r-1}) \norm{\psi}_{r-1}^2 ] .
\label{EE_diff_1}
\end{align}
Using \eqref{1st_order_sys_diff}, we further have
\begin{align}
\|\partial_t \tilde{w}\|_{r-2}  \leq \mathscr{K}(\norm{\psi_2}_{r-2})\norm{\tilde{w}}_{r-1} 
 +  \|\mathfrak{F}\|_{r-2} .
\label{EE_diff_2}
\end{align}

\subsection{Convergence of the iterates}
Now we choose $\psi_2=\tilde{w}_1=\mathbf{\Psi}_{n-1}$, $\psi_1=\mathbf{\Psi}_{n-2}$ and $\tilde{w}_2=\mathbf{\Psi}_n$. 
Note that as in Section~\ref{Section 6.1}, the constant $\tilde{M}$ in \eqref{EE_diff_1} can be taken to be independent of $n$.
Estimates \eqref{EE_diff_1} and \eqref{EE_diff_2} imply
\begin{align}
\begin{split}
&\|\mathbf{\Psi}_n-\mathbf{\Psi}_{n-1}\|_{\bE_0(I)}\\
  \leq & \sqrt{\tilde{M}} e^{\frac{T}{2} \omega(\cC)}[\| \mathbf{\Psi}_{0,n}-\mathbf{\Psi}_{0,n-1}\|_{r-1}+ \sqrt{T}(1+\cC)\mathscr{K}_I(\cC)  \|\mathbf{\Psi}_{n-1}-\mathbf{\Psi}_{n-2}\|_{\bE_0(I)} ]\\
& + \mathscr{K}_I(\cC)\sqrt{\tilde{M}} e^{\frac{T}{2} \omega(\cC)}[\| \mathbf{\Psi}_{0,n}-\mathbf{\Psi}_{0,n-1}\|_{r-1}+ \sqrt{T}(1+\cC)\mathscr{K}_I(\cC)  \| \mathbf{\Psi}_{n-1} - \mathbf{\Psi}_{n-2} \|_{\bE_0(I)} ]\\
&+ \mathscr{K}_I(\cC) \sup\limits_{t\in I} \|\mathbf{\Psi}_{n-1}(t)-\mathbf{\Psi}_{n-2}(t)\|_{r-2}.
\end{split}
\nonumber
\end{align}
In the last line, we can use \eqref{EE_diff_1}  once more to obtain
\begin{align}
\begin{split}
&\sup\limits_{t\in I} \|\mathbf{\Psi}_{n-1}(t)-\mathbf{\Psi}_{n-2}(t)\|_{r-1} \\
\leq & \sqrt{\tilde{M}} e^{\frac{T}{2} \omega(\cC)}[\|\mathbf{\Psi}_{0,n-1}-\mathbf{\Psi}_{0,n-2}\|_{r-1}+ \sqrt{T}(1+\cC)\mathscr{K}_I(\cC)  \|\mathbf{\Psi}_{n-2}-\mathbf{\Psi}_{n-3}\|_{\bE_0(I)} ].
\end{split}
\nonumber
\end{align}
We can choose $T$ small and $(\mathbf{\Psi}_{0,n})$ in such a way that
\begin{align}
\nonumber
(1+\mathscr{K}_I(\cC))\sqrt{\tilde{M}} e^{\frac{T}{2} \omega(\cC)}(\|\mathbf{\Psi}_{n-1}(t)-\mathbf{\Psi}_{n-2}(t)\|_{r-1} + \| \mathbf{\Psi}_{0,n-1}-\mathbf{\Psi}_{0,n-2}\|_{r-1} ) \leq 2^{-n},
\end{align}
\begin{align}
\nonumber
\sqrt{\tilde{M}} e^{\frac{T}{2} \omega(\cC)}\sqrt{T}(1+\cC)\mathscr{K}_I^2(\cC)  \leq 1/16,
\end{align}
and
\begin{align}
\nonumber
\sqrt{\tilde{M}} e^{\frac{T}{2} \omega(\cC)}\sqrt{T}(1+\cC)(\mathscr{K}_I^2(\cC) +\mathscr{K}_I(\cC) )  \leq 1/4.
\end{align}
Set $a_n=\|\mathbf{\Psi}_n-\mathbf{\Psi}_{n-1}\|_{\bE_0(I)}$. We then have
\begin{align}
\nonumber
a_n \leq 2^{-n} + a_{n-1}/4 + a_{n-2}/16.
\end{align}
Using induction, it follows that
\begin{align}
a_n \leq \frac{s_n}{2^{2n-3}} + \frac{F_n}{2^{2n-4}}a_2 + \frac{F_{n-1}}{2^{2n-2}} a_1,
\label{Inductive: a_n}
\end{align} 
where $F_n$ is the $n-$th term of Fibonacci sequence (starting from $0$) and
\begin{align}
\nonumber
s_n=2^{n-3} + s_{n-1}+s_{n-2}.
\end{align}
Letting $b=(1-\sqrt{5})/2$, we obtain the following identities 
\begin{align}
s_n - b s_{n-1} &= 2^{n-3} + (1-b)(s_{n-1} - b s_{n-2})
\nonumber
\\
b(s_{n-1} - b s_{n-2}) &= 2^{n-4} b  + b(1-b)(s_{n-2} - b s_{n-3})
\nonumber
\\
& \vdots\\
b^{n-3}(s_3 - b s_2) &=  b^{n-3}  + b^{n-3}(1-b)(s_2 - b s_1).
\nonumber
\end{align}
We sum these expressions to conclude
\begin{align}
\nonumber
s_n - b^{n-2} s_2 = \sum_{k=0}^{n-3} 2^k b^{n-3-k} + (1-b)(s_{n-1}  -b^{n-2} s_1).
\end{align}
We carry out a similar computation and sum to obtain
\begin{align}
s_n - (1-b) s_{n-1} =& \sum_{k=0}^{n-3} 2^k b^{n-3-k} + b^{n-2} s_2  - (1-b)b^{n-2} s_1 
\nonumber
\\
(1-b)( s_{n-1} - (1-b) s_{n-2} )  =& (1-b)\sum_{k=0}^{n-4} 2^k b^{n-4-k} + (1-b)b^{n-3} s_2  - (1-b)^2 b^{n-3} s_1 
\nonumber
\\
& \vdots
\nonumber
\\
(1-b)^{n-3}( s_3 - (1-b) s_2 )  =& (1-b)^{n-3}  + (1-b)^{n-3}b s_2  - (1-b)^{n-2} b s_1 .
\nonumber
\end{align}
This yields
\begin{align}
s_n - (1-b)^{n-2} s_2 =&[\sum_{k=0}^{n-3} 2^k b^{n-3-k} + (1-b)\sum_{k=0}^{n-4} 2^k b^{n-4-k} + \cdots + (1-b)^{n-3}] 
\nonumber
\\
&+ s_2\sum_{k=1}^{n-2}b^k (1-b)^{n-2-k} + s_1\sum_{k=1}^{n-2}b^{n-1-k} (1-b)^k
\nonumber
\end{align}
and thus
\begin{align}
s_n \leq (n-2) 2^{n-3} + F_n s_2   \leq  (n-2) 2^{n-3} + 2^{n-3}  s_2,
\nonumber
\end{align}
where we have used that $(1-b)^k=F_{k+2} -F_{k+1} b$.
Plugging this expression into \eqref{Inductive: a_n} gives 
\begin{align}
\nonumber
a_n \leq \frac{n-2}{2^n} + \frac{s_2}{2^n}  + \frac{a_2}{2^{n-4}} +\frac{a_1}{2^{n-2}}.
\end{align}
Therefore
\begin{align}
\nonumber
\|\mathbf{\Psi}_n-\mathbf{\Psi}_{n+j}\|_{\bE_0(I)} \leq \|\mathbf{\Psi}_n-\mathbf{\Psi}_{n+1}\|_{\bE_0(I)}+\cdots \|\mathbf{\Psi}_{n+j-1}-\mathbf{\Psi}_{n+j}\|_{\bE_0(I)}
\end{align}
can be made arbitrarily small by taking $n$ large. We thus conclude 
that $\{\mathbf{\Psi}_n\}$ is Cauchy in $C(I; H^{r-1})\cap C^1(I; H^{r-2})$  and therefore converges in this space. 
We denote the limit by $\mathbf{\Psi}\in C(I; H^{r-1})\cap C^1(I; H^{r-2})$. We can let 
$n\to \infty$ in \eqref{1st_order_sys_ind} and thus $\mathbf{\Psi}$ satisfies
\begin{align}
\cF(\mathbf{\Psi}) \mathbf{\Psi}   &= \tilde{\cR}(\mathbf{\Psi}), \quad \mathbf{\Psi}(0)=\mathbf{\Psi}_0.
\nonumber
\end{align}
Finally, it follows from \eqref{Bound EE} that
\begin{align}
\norm{\mathbf{\Psi}(t)}_r + \| \partial_t \mathbf{\Psi}(t)\|_{r-1} \leq \cC,\quad t\in [0,T].
\nonumber
\end{align}

It remains to prove uniqueness. But this follows at once since we have an estimate for the 
difference of two solutions.

\subsection{Continuity of solution}
The proof of time-continuity of the solution with respect to the top norm follows
a standard procedure: first, we prove weak continuity; then, we obtain strong
continuity by showing continuity of the norm.

The weak continuity of the solution $\mathbf{\Psi}$ can be proved by a similar argument to that of 
quasilinear wave equations, since in that proof the structure of the equation is not necessary but only the convergence $\mathbf{\Psi}_n\to \mathbf{\Psi} $ in $C(I; H^{r-1})\cap C^1(I; H^{r-2})$ and an estimate of the form~\eqref{Bound EE} are used.

We put
\begin{align}
\nonumber
\mathfrak{K}(t)=\sqrt{\frac{C_0}{2}I + (\mathfrak{B}(\mathbf{\Psi}(t)))^* \mathfrak{B}(\mathbf{\Psi}(t)) }
\end{align}
and
\begin{align}
\nonumber
\cA_r(t)=\cA_r(\mathbf{\Psi}(t))=  \mathfrak{K}(t) \la \nabla \ra ^r.
\end{align}
Hence
\begin{align}
\nonumber
N_r(t)=N_r(\mathbf{\Psi}(t)) = \cA_r(\mathbf{\Psi}(t))^*  \cA_r(\mathbf{\Psi}(t)).
\end{align}
Recall that $\mathfrak{B}\in OP \cS^0_0(r,2)$. 
From \cite{Marschall-1988}*{Theorems~2.2 and 2.4}, it follows that
\begin{align}
\label{S5: mapping K}
\mathfrak{K}(t) \in \cL(H^s),\quad -r<s<r-1.
\end{align}
Fix $t_0\in [0,T]$. We will show that $\cA_r(t_0) \mathbf{\Psi}(t)$ is weakly continuous in $H^0$. Given any $\epsilon>0$ and $\phi\in H^0$, take a sequence of Schwartz's functions $\phi_j\to \phi $ in $H^0$. Then
\begin{align}
&(\cA_r(t_0) \mathbf{\Psi}(t)- \cA_r(t_0) \mathbf{\Psi}_n(t), \phi) 
\nonumber
\\
=& (\cA_r(t_0) \mathbf{\Psi}(t)- \cA_r(t_0) \mathbf{\Psi}_n(t), \phi - \phi_j) 
+ (\cA_r(t_0) \mathbf{\Psi}(t)- \cA_r(t_0) \mathbf{\Psi}_n(t), \phi_j)
\nonumber
\end{align}
In view of \eqref{Bound EE}, the first term on the RHS is bounded by
\begin{align}
\nonumber
|(\cA_r(t_0) \mathbf{\Psi}(t)- \cA_r(t_0) \mathbf{\Psi}_n(t), \phi - \phi_j)| \leq \mathscr{K}(\cC) \|\phi - \phi_j\|_0,
\end{align}
so that this term can be made less than $\epsilon/2$ upon choosing $j$ large enough.
Next, fixing $j$ in the second term, we have
\begin{align}
\begin{split}
& |(\cA_r(t_0) \mathbf{\Psi}(t)- \cA_r(t_0) \mathbf{\Psi}_n(t), \phi_j)| \\
= & |( \la \nabla \ra ^{r-1} ( \mathbf{\Psi}(t)-   \mathbf{\Psi}_n(t)),  
\la \nabla \ra   \mathfrak{K}^*(t) \phi_j)|
\end{split}
\nonumber
\end{align}
Because $\mathbf{\Psi}_n\to \mathbf{\Psi} $ in $C(I; H^{r-1})$,
invoking \cite{Marschall-1988}*{Theorem~2.4} and \eqref{S5: mapping K}, we obtain
\begin{align}
\nonumber
|(\cA_r(t_0) \mathbf{\Psi}(t)- \cA_r(t_0) \mathbf{\Psi}_n(t), \phi_j)| <\epsilon/2
\end{align}
for all $n \geq n_0$ with some large enough $n_0$. Hence,
\begin{align}
\nonumber
|(\cA_r(t_0) \mathbf{\Psi}(t)- \cA_r(t_0) \mathbf{\Psi}_n(t), \phi) | <\epsilon \quad \text{for all }n\geq n_0 \text{ and } t\in [0,T],
\end{align}
showing that $\cA_r(t_0) \mathbf{\Psi}_n(t) $ converges to $\cA_r(t_0) \mathbf{\Psi}(t)$ uniformly in $t$ in the weak topology. Thus,
$\cA_r(t_0) \mathbf{\Psi}(t)$ is weakly continuous in $t$ with respect to the norm of $H^0$.

We are now ready to show that $ \mathbf{\Psi}  \in C(I; H^r)$. In view of the weak continuity of 
$ \mathbf{\Psi}(t)$, it suffices to demonstrate that the map 
\begin{align}
\nonumber
[t\mapsto \|   \mathbf{\Psi} (t) \|_r] \quad \text{is continuous.}
\end{align}
Applying \eqref{EE inequality} to \eqref{1st_order_sys} and using\eqref{Bound EE}, we conclude that
\begin{align}
\frac{d}{dt}\| \cA_r(t) \mathbf{\Psi}(t)\|^2_0  \leq \mathscr{K}(\cC).
\label{Basic EE 2}
\end{align}
This implies that
\begin{align}\label{Lip cont}
 \| \cA_r(t) \mathbf{\Psi}(t)\|^2_0 =:Y(t) \quad \text{is Lipschitz continuous in } t.
\end{align}
Consider
\begin{align}
& \| \cA_r(t_0) \mathbf{\Psi}(t)\|^2_0 - \| \cA_r(t_0) \mathbf{\Psi}(t_0)\|^2_0  
\nonumber
\\
=&   (\| \cA_r(t_0) \mathbf{\Psi}(t)\|^2_0 - \| \cA_r(t) \mathbf{\Psi}(t)\|^2_0   ) 
+  (\| \cA_r(t) \mathbf{\Psi}(t)\|^2_0 - \| \cA_r(t_0) \mathbf{\Psi}(t_0)\|^2_0   ).
\nonumber
\end{align}
The first term on the RHS can be estimated as follows.
\begin{align}
& |(\| \cA_r(t_0) \mathbf{\Psi}(t)\|^2_0 - \| \cA_r(t) \mathbf{\Psi}(t)\|^2_0   )| 
\nonumber
\\
= &|\| \cA_r(t_0) \mathbf{\Psi}(t)\|_0 - \| \cA_r(t) \mathbf{\Psi}(t)\|_0   |(\| \cA_r(t_0) \mathbf{\Psi}(t)\|_0 + \| \cA_r(t) \mathbf{\Psi}(t)\|_0   )
\nonumber
\\
\leq & \mathscr{K}(\cC) \| (\mathfrak{K}(t)-\mathfrak{K}(t_0)) \la \nabla \ra ^r   \mathbf{\Psi}(t)\|_0  
\nonumber
 \\
\leq & \mathscr{K}(\cC) \| \mathfrak{K}(t)-\mathfrak{K}(t_0)\|_{\cL(H^0)}.
\nonumber
\end{align}
As elements in $\cL(H^0)$,  it is not hard to check that $\mathfrak{K}(\mathbf{\Psi})$ depends continuously on $\norm{\mathbf{\Psi}}_{r-1}$.
Combining with \eqref{Lip cont}, this observation shows 
that $[t \mapsto \|\cA_r(t_0) \mathbf{\Psi} (t)\|_0] $ is continuous at $t_0$; and thus 
\begin{center}
$\cA_r(t_0) \mathbf{\Psi} (t)$ is continuous in $t$ at $t_0$ w.r.t. $H^0$. 
\end{center}
Since $t_0$ is arbitrary, from
\begin{align}
\|\mathbf{\Psi}(t )-\mathbf{\Psi}(t_0)\|_r^2 \leq C \|\cA_r(t_0) \mathbf{\Psi} (t ) 
- \cA_r(t_0) \mathbf{\Psi} (t_0)\|_0^2,
\nonumber
\end{align}
we obtain that $\mathbf{\Psi}   \in C(I; H^r)$.
Using this fact and equation~\eqref{1st_order_sys_ind}, we immediately conclude 
\begin{align}
\nonumber
\mathbf{\Psi}   \in C(I; H^r) \cap C^1(I, H^{r-1}).
\end{align}

\section{Solution to the original system} 
It remains to prove that the solution $\mathbf{\Psi}$ that we obtained for the system
\eqref{E:Matrix_system_1st_order} yields a solution to \eqref{E:u_unit}
and \eqref{E:Div_T}. The argument follows a known approximation argument by 
analytic functions, so the main task is to show that the analytic
Cauchy problem for \eqref{E:u_unit} and \eqref{E:Div_T} can be solved.
In fact, due to the compactness of $\mathbb{T}^3$ and some minor technical points
related to a localization procedure, we work in a slightly larger space than that 
of analytic functions, namely, Gevrey spaces.

\begin{remark}
\label{R:Gevrey_original}
In the introduction, we alluded to a local well-posendess in Gevrey spaces for
equations \eqref{E:u_unit} and \eqref{E:Div_T}. However, what was actually
established in \cite{DisconziBemficaNoronhaBarotropic} was the local
well-posedness in Gevrey spaces for equations \eqref{E:EofM}, i.e., the projection
onto the spaces orthogonal and parallel to $u$ of equations \eqref{E:u_unit} and \eqref{E:Div_T}. 
That this solutions in fact produces a solution
to \eqref{E:u_unit} and \eqref{E:Div_T} was 
not showed in \cite{DisconziBemficaNoronhaBarotropic}. Because we do need
Gevrey solutions to \eqref{E:u_unit} and \eqref{E:Div_T}
to carry out the aforementioned approximation argument, we cannot rely on 
\cite{DisconziBemficaNoronhaBarotropic} and, therefore, we establish the
desired result here.
\end{remark}

\begin{remark}
In practice (e.g., when implementing numerical simulations), physicists do not use equations
\eqref{E:u_unit} and \eqref{E:Div_T}, adopting instead \eqref{E:EofM} as the starting point.
Therefore, for applications in physics, the result obtained in \cite{DisconziBemficaNoronhaBarotropic}
(see previous Remark) is enough as far as only Gevrey regularity is concerned.
\end{remark}

\subsection{The original equations in explicit form}
Equation \eqref{E:Div_T} reads
\begin{align}
& (\varepsilon+A_1+A_2+P)(u^\mu\nabla_\nu u^\nu+u^\nu\nabla_\nu u^\mu)+u^\mu u^\nu\nabla_\nu(\varepsilon+A_1)+\Proj^{\mu\nu}\nabla_\nu(P+A_2)\\
&-2\upsigma^{\mu\nu}\nabla_\nu\upeta-2\upeta\nabla_\nu\upsigma^{\mu\nu}+Q^\mu\nabla_\nu u^\nu+u^\nu\nabla_\nu Q^\mu+Q^\nu\nabla_\nu u^\mu+u^\mu\nabla_\nu Q^\nu=0.\label{E:DivT2}
\end{align} 
Applying $u^\mu \nabla_\mu$ twice to \eqref{E:u_unit} produces:
\begin{align}
&u_\nu u^\alpha u^\beta\nabla_\alpha\nabla_\beta u^\nu+u^\alpha u^\beta(\nabla_\alpha u_\nu)(\nabla_\beta u^\nu)=0.\label{E:Constraint} 
\end{align}
We may rewrite the above equations to obtain the complete set of equations given by
\begin{subequations}{\label{Eq4}}
\begin{align}
& u_\nu u^\alpha u^\beta\nabla_\alpha\nabla_\beta u^\nu=B(\partial u),\label{4a}\\
& B^{\mu\alpha\beta}
\partial_\alpha \partial_\beta \varepsilon +B^{\mu\alpha\beta}_\nu\partial_\alpha
\partial_\beta u^\nu=B^\mu(\partial \varepsilon,\partial u,\partial g),\label{4b}
\end{align}
\end{subequations}
where
\begin{align}
&B^{\mu\alpha\beta}=\frac{\upchi_1 u^\mu u^\alpha u^\beta+(\upchi_3+\uplambda c_s^2) u^{(\alpha}\Proj^{\beta)\mu}+\uplambda c_s^2 u^\mu\Proj^{\alpha\beta} }{(\varepsilon+P)}\\
&B^{\mu\alpha\beta}_\nu=(\upchi_2+\uplambda)u^\mu u^{(\alpha}\delta^{\beta)}_\nu+(\upchi_4-\frac{\upeta}{3})\Proj^{\mu(\alpha}\delta^{\beta)}_\nu-\upeta\Proj^{\alpha\beta}\delta^\mu_\nu+\uplambda u^\alpha u^\beta\delta^\mu_\nu,
\end{align} 
$B(\partial u)$ and $B^\mu(\partial \varepsilon,\partial u)$
are analytic functions of $\varepsilon$, $u$, and their first order derivatives and
contain no term in second order derivatives of $\varepsilon$ or $u$.
By constructing the vector $U=(\varepsilon,u^\alpha) \in\mathbb{R}^{5}$, 
we may write the above equations in matrix form as 
$\mathfrak{m}^{\alpha\beta}\partial^2_{\alpha\beta}U=\mathcal{B}$, where $\mathcal{B}=(B,B_\mu)\in\mathbb{R}^{5}$ and
\begin{align}
\mathfrak{m}^{\alpha\beta}=\begin{bmatrix}
0 & u^\alpha u^\beta u_\nu\\
B^{\mu\alpha\beta} & B^{\mu\alpha\beta}_\nu
\end{bmatrix}.
\end{align}

\subsection{The characteristic determinant\label{S:Characteristics}}
We will here compute the characteristic determinant of equations 
\eqref{Eq4}. Let $\upxi$ be an arbitrary co-vector in spacetime. 
To simplify the notation, denote $\mathfrak{a}:=u^\mu\upxi_\mu$ and $\mathfrak{b}^\mu:=\Proj^{\mu\nu}\upxi_\nu$, and 
\begin{align}
B^\mu& :=B^{\mu\alpha\beta}\upxi_\alpha\upxi_\beta=
\frac{\upchi_1 u^\mu \mathfrak{a}^2+(\upchi_3+\uplambda c_s^2) \mathfrak{a}\mathfrak{b}^\mu+\uplambda c_s^2 u^\mu \mathfrak{b}^\alpha\mathfrak{b}_\alpha }{(\varepsilon+P)},\\
B^\mu_\nu& :=B^{\mu\alpha\beta}_\nu\upxi_\alpha\upxi_\beta
=(\mathfrak{a}(\upchi_2+\uplambda)u^\mu +(\upchi_4-\frac{\upeta}{3})\mathfrak{b}^\mu)\upxi_\nu+(\uplambda \mathfrak{a}^2-\upeta \mathfrak{b}^\alpha\mathfrak{b}_\alpha)\delta^\mu_\nu.
\end{align}
Also, let us define 
\[C\equiv[C^A_B]_{5\times 5}:=\mathfrak{m}^{\alpha\beta}\upxi_\alpha \upxi_\beta=\begin{bmatrix}
0 & \mathfrak{a}^2 u_\nu\\
B^{\mu} & B^{\mu}_\nu
\end{bmatrix},\] 
where $A,B=\bar{1},\mu=\bar{1},0,1,2,3$, with $\bar{1}$ denoting the first line or column index. Then, $C^{\bar{1}}_{\bar{1}}=0$, $C^{\bar{1}}_\nu=\mathfrak{a}^2 u_\nu$, $C^\mu _{\bar{1}}=B^\mu$, and $C^\mu_\nu=B^\mu_\nu$. Using the Levi-Civita symbol $\epsilon^{ABCDE}$ with $\epsilon^{\bar{1}0123}=\epsilon_{\bar{1}0123}=1$, where $\epsilon^{ABCDE}\epsilon_{FGHIJ}=5!\updelta^A_{[F}\updelta^B_G\updelta^C_H\updelta^D_I\updelta^E_{J]}$ with the bracket ${}_{[\cdots]}$ being the anti-symmetrization of the indexes $FGHI$, we obtain that
\begin{align}
&\det(\mathfrak{m}^{\alpha\beta}\upxi_\alpha\upxi_\beta)=\det(C)=\frac{\epsilon_{A_1A_2A_3A_4A_5}\epsilon^{B_1B_2B_3B_4B_5}}{5!}C^{A_1}_{B_1}
C^{A_2}_{B_2}C^{A_3}_{B_3}C^{A_4}_{B_4}C^{A_5}_{B_5}\\
&=C^{\bar{1}}_{\nu}\frac{\epsilon^{\nu B_2B_3B_4B_5}\epsilon_{\bar{1}\mu_2\mu_3\mu_4\mu_5}}{4!}
C^{\mu_2}_{B_2}C^{\mu_3}_{B_3}C^{\mu_4}_{B_4}C^{\mu_5}_{B_5}
=-C^{\bar{1}}_{\nu}C^\mu_{\bar{1}}\frac{\epsilon^{\bar{1}\nu\nu_3\nu_4\nu_5}\epsilon_{\bar{1}\mu\mu_3\mu_4\mu_5}}{3!}
C^{\mu_3}_{\nu_3}C^{\mu_4}_{\nu_4}C^{\mu_5}_{\nu_5}\\
&=-\mathfrak{a}^2u_\nu B^\mu\frac{\epsilon^{\bar{1}\nu\nu_2\nu_3\nu_4}\epsilon_{\bar{1}\mu\mu_2\mu_3\mu_4}}{3!}B^{\mu_2}_{\nu_2}B^{\mu_3}_{\nu_3}B^{\mu_4}_{\nu_4}\\
&=-\mathfrak{a}^2u_\nu B^\mu(\uplambda \mathfrak{a}^2-\upeta \mathfrak{b}^\alpha\mathfrak{b}_\alpha)^3\frac{\epsilon^{\bar{1}\nu\mu_2\mu_3\mu_4}\epsilon_{\bar{1}\mu\mu_2\mu_3\mu_4}}{3!}\\
&-\mathfrak{a}^2u_\nu B^\mu(\mathfrak{a}(\upchi_2+\uplambda)u^{\mu_2} +(\upchi_4-\frac{\upeta}{3})\mathfrak{b}^{\mu_2})\upxi_{\nu_2}(\uplambda \mathfrak{a}^2-\upeta \mathfrak{b}^\alpha\mathfrak{b}_\alpha)^2\frac{\epsilon^{\bar{1}\nu\nu_2\mu_3\mu_4}\epsilon_{\bar{1}\mu\mu_2\mu_3\mu_4}}{2!}\\
&=\frac{\mathfrak{a}^2}{\varepsilon+P}(\uplambda \mathfrak{a}^2-\upeta \mathfrak{b}^\alpha\mathfrak{b}_\alpha)^2(\lambda (\upchi_1 \mathfrak{a}^2-\upchi_3\mathfrak{b}^\alpha\mathfrak{b}_\alpha)\mathfrak{a}^2-\upchi_2(\upchi_3+\uplambda c_s^2 ) \mathfrak{a}^2\mathfrak{b}^\alpha\mathfrak{b}_\alpha\\
&+(\upchi_1 \mathfrak{a}^2+\uplambda c_s^2)(\upchi_4-\frac{4\upeta}{3}\mathfrak{b}^\beta\mathfrak{b}_\beta)\mathfrak{b}^\alpha\mathfrak{b}_\alpha)\\
&=\frac{\uplambda^3\upchi_1}{\varepsilon+P}\prod_{a=1,2,\pm}((u^\alpha\upxi_\alpha)^2-\beta_a \Proj^{\alpha\beta}\upxi_\alpha\upxi_\beta)^{m_a},
\end{align}
where $m_1=1$, $m_2=2$, and $m_\pm=1$, while the $\beta_a$'s are the same as the one obtained in \eqref{det}. 

We have already showed that conditions $\uplambda,\upeta,\upchi_1>0$ together with \eqref{Delta}--\eqref{10c} (which are the assumptions in Theorem \ref{T:main_theorem}) guarantee that $0\le \beta_a\le 1$. 
Therefore, comparing $(u^\alpha\upxi_\alpha)^2-\beta_a \Proj^{\alpha\beta}\upxi_\alpha\upxi_\beta$
with the characteristics of an acoustical metric (see, e.g., \cite{DisconziSpeckRelEulerNull}),
we conclude that $\det(\mathfrak{m}^{\alpha\beta}\upxi_\alpha\upxi_\beta)$ is a product
of hyperbolic polynomials.

\begin{remark}[The system's characteristics]
Setting $\det(\mathfrak{m}^{\alpha\beta}\upxi_\alpha\upxi_\beta)$ equal to zero, we obtain
the characteristics of the system \eqref{Eq4}. Not surprisingly, these are the same as the characteristics
of equations \eqref{E:EofM} which have been computed in \cite{DisconziBemficaNoronhaBarotropic}.
\end{remark}

\subsection{Proof of Theorem \ref{T:main_theorem}}.
Let us group that unknowns $\varepsilon$, $u^\alpha$ in the
$5$-component vector $V=(\varepsilon,u^\alpha)$. 
To each component $V^I$ we associate an index $m_I$, $I=1,\dots,5$, and to each one of the $5$ equations \eqref{4a}-\eqref{4b} we associate an index $n_J$, in such a way that equations
\eqref{4a}-\eqref{4b} can be written as
\begin{gather}
 h^J_I(\partial^{m_K-n_J-1}V^K,\partial^{m_I-n_J})V^I+b^J(\partial^{m_K-n_J-1}V^K)=0,
\label{system_general}
\end{gather}
where $I,J=1,\dots,5$, $ h^J_I(\partial^{m_K-n_J-1}V^K,\partial^{m_I-n_J})$
is a homogeneous differential operator of order $m_I - n_J$ (which could possibly be zero)
whose coefficients depend on at most $m_K - n_J - 1$ derivatives of $V^K$, 
$K=1,\dots, 5$, and there is a sum over $I$ in  
$h^J_I(\cdot)V^I$. The terms $b^J(\partial^{m_K-n_J-1}V^K)$ also depend on at most
$m_K - n_J - 1$ derivatives of $V^K$, $K=1,\dots, 5$. The indices $m_I$ and $n_J$ are
defined up to an overall additive constant, but the simplest choice to have equations
\eqref{4a}, \eqref{4b} written as \eqref{system_general} is $m_I = 2$, $n_J = 0$, for all 
$I,J=1,\dots,5$.

Using the fact that the characteristic determinant of \eqref{system_general} computed above 
is a product of hyperbolic polynomials, we conclude that \eqref{system_general}
forms a Leray-Ohya system (see \cite{DisconziExistenceCausalityConformal}). We can then apply
theorems A.18 and A.23 of \cite{DisconziExistenceCausalityConformal}
(whose proofs can be found in  \cite{Choquet-Bruhat-1966,LerayOhyaHyperbolicNonStrictNonLinear-1967}) to conclude that equations
\eqref{Eq4} are locally well-posed in suitable\footnote{From the previously mentioned
theorems, it is not difficult to see that one can be very precise about the quantitative
properties of solutions, including the exact Gevrey regularity. Such details, however, are not
important here for our argument.} Gevrey spaces.

\begin{remark}
Theorems A.18 and A.23 are applicable to systems in $[0,T]\times \mathbb{R}^n$, whereas
here we have $[0,T]\times\mathbb{T}^3$. Thus, one needs to carry out a localization
and gluing argument before invoking these theorems. 
Such argument is possible due to the existence of a domain of dependence for solutions 
guaranteed by Theorem A.19 of \cite{DisconziExistenceCausalityConformal}.
The procedure is exactly the same
as in \cite{DisconziExistenceCausalityConformal} so we will not present it here.
\end{remark}

Equation \eqref{4a} can be written as
\begin{align}
u^\mu u^\nu \nabla_\mu \nabla_\nu (u_\alpha u^\alpha) = 0.
\nonumber
\end{align}
Viewing this as an equation for $u^\alpha u_\alpha$, we see that it forms a Leray-Ohya equation,
so it admits unique solutions in Gevrey spaces. Therefore, we conclude that a solution
to \eqref{system_general} satisfies $u^\alpha u_\alpha = -1$ provided that this condition holds
initially, which is the by construction (see comment after the statement of Theorem \ref{T:main_theorem}).

The conclusion that $\mathbf{\Psi}$ yields a solution to \eqref{E:u_unit} and \eqref{E:Div_T} now follows
from a known approximation argument, so we will be brief. 

Consider the initial data $\mathcal{I} = (\varepsilon_{(0)},\varepsilon_{(1)},
u_{(0)}, u_{(1)}) \in H^r$ for \eqref{E:u_unit}-\eqref{E:Div_T}
and let $\mathcal{I}_k$ be a sequence of Gevrey regular
data converging to $\mathcal{I}$ in $H^r$. For each $k$, 
let $V_k=(\varepsilon_k,u_k)$ be the Gevrey regular 
solution to \eqref{E:u_unit}-\eqref{E:Div_T} with data $\mathcal{I}_k$, 
whose existence is ensured by the foregoing discussion.
In view of the way \eqref{E:Matrix_system_1st_order}
was derived from \eqref{E:u_unit}-\eqref{E:Div_T}, for each $k$, we obtain a 
Gevrey regular solution $\mathbf{\Psi}_k$ to \eqref{E:Matrix_system_1st_order}, 
with $\mathbf{\Psi}_k$ defined in terms of
$V_k$ according to the definitions of Section \ref{S:New_system}.

Let $\mathbf{\Psi}_0$ be initial data for \eqref{E:Matrix_system_1st_order} constructed
out of $\mathcal{I}$, i.e., we define $\mathbf{\Psi}_0$ in terms of 
$\mathcal{I}$ using the definitions of Section \ref{S:New_system}. 
This is possible since the entries of $\mathbf{\Psi}_0$ 
will be simple algebraic expressions in terms of $\mathcal{I}$.

Let $\mathbf{\Psi}$ be the solution to \eqref{E:Matrix_system_1st_order}
with data $\mathbf{\Psi}_0$. Note that 
we do not assume that $\mathbf{\Psi}$ is given in terms 
of the original fluid variables via the relations of Section \ref{S:New_system}
since at this point we do not yet have a solution to 
\eqref{E:u_unit}-\eqref{E:Div_T} with data $\mathcal{I}$. 
In other words, the entries of $\mathbf{\Psi}$ are 
treated as independent variables; at this point
the only relation between $\mathbf{\Psi}$ and the original system \eqref{E:u_unit}-\eqref{E:Div_T} 
is that $\mathbf{\Psi}_0$ is constructed out of $\mathcal{I}$. 

The estimates for solutions to \eqref{E:Matrix_system_1st_order}
derived in Section \ref{Section 5} combined with the estimates for the difference of solutions
in Section \ref{S:Difference},
imply that as $\mathcal{I}_k \rightarrow
\mathcal{I}$ in $H^r$, $\mathbf{\Psi}_k$ converges
to $\mathbf{\Psi}$, and thus the solutions $V_k$ to \eqref{E:u_unit}-\eqref{E:Div_T} 
converge to a limit $V$ in $H^r$. 
Since $r > 9/2$, we can pass to the limit in the equations
\eqref{E:u_unit}-\eqref{E:Div_T} satisfied by $V_k$ to conclude that $V$ solves \eqref{E:u_unit}-\eqref{E:Div_T} as well
(and that $\mathbf{\Psi}$ is in fact given in terms of $V$ by 
the same expressions that define $\mathbf{\Psi}_k$ in terms of $V_k$). By construction, 
$V$ takes the data $\mathcal{I}$.



\bibliography{References.bib}

\end{document}